\documentclass[11pt,leqno]{article}
\usepackage{ graphicx, amsfonts, amsthm, amsxtra, verbatim, multicol}
\usepackage[mathscr]{euscript}
\textheight 23truecm
\textwidth 15truecm
\addtolength{\oddsidemargin}{-1.05truecm}
\addtolength{\topmargin}{-2truecm}
\makeindex
\begin{document}

\begin{center}
{\LARGE\bf Quasi-modular forms attached to elliptic curves: Hecke operators
}
\\
\vspace{.25in} {\large {\sc Hossein Movasati}} \\
Instituto de Matem\'atica Pura e Aplicada, IMPA, \\
Estrada Dona Castorina, 110,\\
22460-320, Rio de Janeiro, RJ, Brazil, \\
E-mail:
{\tt hossein@impa.br} \\
{\tt www.impa.br/$\sim$hossein}
\end{center}

\newtheorem{theo}{Theorem}
\newtheorem{exam}{Example}
\newtheorem{coro}{Corollary}
\newtheorem{defi}{Definition}
\newtheorem{prob}{Problem}
\newtheorem{lemm}{Lemma}
\newtheorem{prop}{Proposition}
\newtheorem{rem}{Remark}
\newtheorem{conj}{Conjecture}
\newtheorem{exe}{Exercise}
\newcommand\diff[1]{\frac{d #1}{dz}} 
\def\End{{\rm End}}              
\def\hol{{\rm Hol}}
\def\sing{{\rm Sing}}            
\def\spec{{\rm Spec}}            
\def\cha{{\rm char}}             
\def\Gal{{\rm Gal}}              
\def\jacob{{\rm jacob}}          
\def\Z{\mathbb{Z}}                   
\def\O{{\cal O}}                       

\def\C{\mathbb{C}}                   
\def\as{\mathbb{U}}                  
\def\ring{{\sf R}}                             
\def\R{\mathbb{R}}                   
\def\N{\mathbb{N}}                   
\def\A{\mathbb{A}}                   

\def\D{\mathbb{D}}                   
\def\uhp{{\mathbb H}}                
\newcommand\ep[1]{e^{\frac{2\pi i}{#1}}}
\newcommand\HH[2]{H^{#2}(#1)}        
\def\Mat{{\rm Mat}}              
\newcommand{\mat}[4]{
     \begin{pmatrix}
            #1 & #2 \\
            #3 & #4
       \end{pmatrix}
    }                                
\newcommand{\matt}[2]{
     \begin{pmatrix}                 
            #1   \\
            #2
       \end{pmatrix}
    }
\def\ker{{\rm ker}}              
\def\cl{{\rm cl}}                
\def\dR{{\rm dR}}                

\def\hc{{\mathsf H}}                 
\def\Hb{{\cal H}}                    
\def\GL{{\rm GL}}                
\def\pedo{{\cal P}}                  
\def\PP{\tilde{\cal P}}              
\def\cm {{\cal C}}                   
\def\K{{\mathbb K}}                  
\def\k{{\mathsf k}}                  
\def\F{{\cal F}}                     
\def\M{{\cal M}}
\def\RR{{\cal R}}
\newcommand\Hi[1]{\mathbb{P}^{#1}_\infty}
\def\pt{\mathbb{C}[t]}               
\def\W{{\cal W}}                     
\def\Af{{\cal A}}                    
\def\gr{{\rm Gr}}                
\def\Im{{\rm Im}}                
\newcommand\SL[2]{{\rm SL}(#1, #2)}    
\newcommand\PSL[2]{{\rm PSL}(#1, #2)}  
\def\Res{{\rm Res}}              

\def\L{{\cal L}}                     
\def\Aut{{\rm Aut}}              
\def\any{R}                          
\newcommand\ovl[1]{\overline{#1}}    

\def\per{{\sf  pm}}  
\def\T{{\cal T }}                    
\def\tr{{\sf tr}}                 
\newcommand\mf[2]{{M}^{#1}_{#2}}     
\newcommand\bn[2]{\binom{#1}{#2}}    
\def\ja{{\rm j}}                 
\def\Sc{\mathsf{S}}                  
\newcommand\es[1]{g_{#1}}            
\newcommand\V{{\mathsf V}}           
\newcommand\Ss{{\cal O}}             
\def\rank{{\rm rank}}                
\def\diag{{\rm diag}}
\def\BM{{\sf H}}
\def\Fi{{S}}
\def\Ra{{\rm R}}

\def\Q{{\mathbb Q}}                   

\def\P{\mathbb{P}}

\begin{abstract}
In this article we introduce Hecke operators on the differential algebra of geometric 
quasi-modular forms. As an application for each natural number $d$ we construct  a vector field in six dimensions which determines uniquely 
the polynomial relations between the Eisenstein series of weight $2,4$ and $6$ and their transformation under multiplication of the argument by $d$, 
and in particular, 
it determines uniquely the modular curve of degree $d$ isogenies between elliptic curves.
\end{abstract}

\section{Introduction}
The theory of quasi-modular forms was first introduced by Kaneko and Zagier in \cite{kaza95} due to its applications in mathematical physics.
In \cite{ho06-2} and \cite{ho14} we have described how one can introduce quasi-modular forms in the framework of the algebraic geometry of elliptic
curves, and in particular, how the Ramanujan differential equation between Eisenstein series can be derived from the Gauss-Manin connection of 
families of elliptic curves. In the present article we proceed further our investigation and  we introduce Hecke operators for quasi-modular forms 
and give some applications. However, the main motivation behind this work is to prepare the ground for similar topics in the case of Calabi-Yau varieties, 
see \cite{ho11}.

In  \cite{CDLW}  the authors describe a differential equation in the $j$-invariant of two elliptic curves which is 
tangent to all modular curves  of degree $d$ isogenies of elliptic curves. This differential equation can be derived from 
the Shwarzian differential equation of the $j$-function and the later can be calculated from
the Ramanujan differential equation between Eisenstein series. 
This suggests that there must be a relation between Ramanujan differential equation and modular curves. 
In this article we also establish  this relation.

Consider the Ramanujan ordinary differential equation 
\begin{equation}
\label{raman}
{\rm R}:
 \left \{ \begin{array}{l}
\dot s_1=\frac{1}{12}(s_1^2-s_2) \\
\dot s_2=\frac{1}{3}(s_1s_2-s_3) \\
\dot s_3=\frac{1}{2}(s_1s_3-s_2^2)
\end{array} \right. \dot s_k=\frac{\partial s_k}{\partial \tau}
\end{equation} 
which is satisfied by Eisenstein series:
\begin{equation}
\label{eisenstein}
s_i(\tau)=a_iE_{2i}(q):=a_i\left(1+b_i\sum_{n=1}^\infty \left (\sum_{d\mid n}d^{2i-1}\right )q^{n}\right ),
\end{equation}
$$
i=1,2,3,  \ q=e^ {2\pi i\tau},\ \Im(\tau)>0
$$
and 
$$
(b_1,b_2,b_3)=
(-24, 240, -504),\ \ (a_1,a_2,a_3)=(2\pi i,(2\pi i)^2 ,(2\pi i)^3).
$$
The algebra of modular forms for $\SL 2\Z$ is generated by the Eisenstein series $E_4$ and $E_6$ and 
all modular forms for congruence groups are algebraic  over the field $\C(E_4,E_6)$, see for instance \cite{zag123}. In a similar way
the algebra of quasi-modular forms for $\SL 2\Z$ is generated by $E_2,E_4$ and $E_6$, see for instance \cite{maro05, ho14}, and we have:
\begin{theo}\rm
\label{7nov2011}
For $i=1,2,3$ and $d\in\N$, there is a unique homogeneous polynomial $I_{d,i}$ of degree $i\cdot \psi(d)$, 
where  $\psi(d):=d\prod_{p}(1+\frac{1}{p})$ is the Dedekind 
$\psi$ function and $p$ runs through primes $p$ dividing $d$,  
in the weighted ring
\begin{equation}
\label{28mar2012}
\Q[t_i,s],\ {\rm weight}(t_i)=i,\ {\rm weight}(s_j)=j,\ j=1,2,3
\end{equation}
and monic in the variable $t_i$ such that $t_i(\tau):=d^{2i}\cdot s_i(d\cdot \tau),s_1(\tau),s_2(\tau), s_3(\tau)$ satisfy the algebraic relation:
$$
I_{d,i}(t_i,s_1,s_2, s_3)=0.
$$
Moreover, for $i=2,3$ the polynomial $I_{d,i}$ does not depend on $s_1$.
\end{theo}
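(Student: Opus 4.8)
The plan is to realize $t_i=d^{2i}s_i(d\cdot\tau)$ as one member of a natural orbit of $\psi(d)$ (quasi-)modular forms for $\Gamma_0(d)$ and to take the associated characteristic polynomial. Since $[\SL 2\Z:\Gamma_0(d)]=\psi(d)$, the map $\tau\mapsto d\cdot\tau$ corresponds to one of the $\psi(d)$ cyclic degree-$d$ isogenies of $E_\tau=\C/(\Z+\Z\tau)$, and its $\SL 2\Z$-translates produce all of them. Fixing coset representatives $\gamma_1,\dots,\gamma_{\psi(d)}$ of $\Gamma_0(d)\backslash\SL 2\Z$, let $t_i^{(1)},\dots,t_i^{(\psi(d))}$ be the conjugates of $t_i$ under the slash action of these representatives (equivalently, the suitably normalized values of $E_{2i}$ at the $\psi(d)$ isogenous curves), with $t_i^{(1)}=t_i$. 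First I would form
\begin{equation*}
P_{d,i}(X,\tau)=\prod_{k=1}^{\psi(d)}\bigl(X-t_i^{(k)}(\tau)\bigr).
\end{equation*}
Its coefficients are the elementary symmetric functions of the $t_i^{(k)}$, hence $\SL 2\Z$-invariant; each $t_i^{(k)}$ is holomorphic on $\uhp$ and at the cusps (the $E_{2i}$ being holomorphic Eisenstein series), so the coefficient of $X^{\psi(d)-m}$ is a (quasi-)modular form for $\SL 2\Z$ of modular weight $2im$. Because the algebra of quasi-modular forms for $\SL 2\Z$ equals $\C[E_2,E_4,E_6]=\C[s_1,s_2,s_3]$ and the $E_{2i}$ have rational $q$-expansions, this coefficient is a homogeneous element of $\Q[s_1,s_2,s_3]$ of weight $im$ in the grading (\ref{28mar2012}). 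Setting $X=t_i$ kills the $k=1$ factor, so $P_{d,i}(t_i,\tau)=0$; rewriting the coefficients in the variables $s_1,s_2,s_3$ yields a polynomial $I_{d,i}$, monic in $t_i$ of degree $\psi(d)$, with $I_{d,i}(t_i,s_1,s_2,s_3)=0$. The weight bookkeeping---the term $c_m\,t_i^{\psi(d)-m}$ has weight $im+i(\psi(d)-m)=i\cdot\psi(d)$---shows $I_{d,i}$ is homogeneous of weighted degree $i\cdot\psi(d)$.

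For the claim that $I_{d,i}$ does not involve $s_1$ when $i=2,3$, I would use that $E_4$ and $E_6$ are genuine modular forms, so each conjugate $t_i^{(k)}$ is an honest modular form for $\Gamma_0(d)$ and the symmetric functions above are honest modular forms for $\SL 2\Z$; these lie in $\C[E_4,E_6]=\C[s_2,s_3]$, which does not contain $s_1$. For $i=1$ the Eisenstein series $E_2$ is only quasi-modular---its slash transformation carries the usual weight-$2$ correction term---so the symmetric functions are genuinely quasi-modular and $s_1=a_1E_2$ is permitted to appear, exactly as the statement allows.

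It remains to prove uniqueness together with the fact that the $t_i$-degree is exactly $\psi(d)$; both follow once $P_{d,i}$ is shown to be the minimal polynomial of $t_i$ over the field $\C(s_1,s_2,s_3)$, a purely transcendental extension of $\C$ of transcendence degree $3$ since $E_2,E_4,E_6$ are algebraically independent. As $\SL 2\Z$ permutes the $\Gamma_0(d)$-cosets, equivalently the cyclic degree-$d$ isogenies, transitively, it suffices to show that the $\psi(d)$ conjugates $t_i^{(k)}$ are pairwise distinct as functions; then $P_{d,i}$ is squarefree and irreducible. I would establish distinctness by comparing cusp (Puiseux-in-$q$) expansions: writing the representatives in Hecke normal form $\mat{a}{b}{0}{a'}$ with $aa'=d$, the forms $E_{2i}(\frac{a\tau+b}{a'})$ have expansions in $q^{a/a'}$ whose leading data differ for distinct triples $(a,a',b)$, forcing the $t_i^{(k)}$ to differ. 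Granting this, any homogeneous polynomial of weighted degree $i\cdot\psi(d)$ that is monic in $t_i$ and vanishes on $(t_i,s_1,s_2,s_3)$ must have $t_i$-degree exactly $\psi(d)$ by homogeneity, and the difference of two such candidates would furnish a nonzero relation for $t_i$ of $t_i$-degree $<\psi(d)$ over $\C(s_1,s_2,s_3)$, contradicting minimality; hence $I_{d,i}$ is unique. The main obstacle is precisely this distinctness/irreducibility step---controlling whether the several isogenous values of $E_{2i}$ can collapse and carrying the argument through uniformly in $d$, with the additional care required by the correction terms in the quasi-modular case $i=1$.
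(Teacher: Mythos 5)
Your proposal is essentially the paper's own proof: the paper defines $I_{d,i}:=P^0_{s_i}(t_i)$, where $P^0_f(x)=\prod_{A}(x-d\cdot f||_{2i}A)$ is the product over $A\in(\SL 2\Z\backslash \Mat_d(2,\Z))^0$, i.e.\ over the same $\psi(d)$ cyclic degree-$d$ isogenies ($\Gamma_0(d)$-cosets) that you use, and it identifies the coefficients as full quasi-modular forms of the correct weight by writing them as polynomials in the refined Hecke operators $T_d^0(s_i^k)$ --- which is your elementary-symmetric-function argument in the equivalent language of power sums. The one substantive difference is that you also argue uniqueness (distinctness of the conjugates via $q$-expansions plus transitivity of the $\SL 2\Z$-action, hence irreducibility), a point the paper's proof section leaves implicit (irreducibility of $V_d$ only appears later, by a geometric argument); your sketch there is the classical one, but to make ``transitivity implies irreducibility'' rigorous you still need the slash/double-slash action to induce field automorphisms fixing $\C(s_1,s_2,s_3)$ --- the $s_j$ themselves are not $\SL 2\Z$-invariant functions --- which one obtains from the multiplicativity of the double-slash operator (e.g.\ via almost holomorphic forms) or by dehomogenizing to weight-zero functions over $\C(j)$.
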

The novelty of Theorem \ref{7nov2011} is mainly due to the case $i=1$.  Since the Eisenstein series $E_2,E_4$ and $E_6$ are algebraically independent over $\C$, we can consider 
$s_i,t_i,\ i=1,2,3$ as indeterminate variables. 
In this way we regard $(t,s)=(t_1,t_2,t_3,s_1,s_2,s_3)$ as coordinates of the affine variety 
$\A_\k^6$, where $\k$ is any field of characteristic zero and not necessarily algebraically closed.
The Ramanujan's ordinary differential equation (\ref{raman}) is considered as a vector field in $\A_\k^3$ with the coordinates $s$.
It can be shown  that the curve given by $I_{d,2}=I_{d,3}=0$ in the weighted projective space 
$\P ^{(2,3,2,3)}_\C$ with the coordinates $(t_2,t_3,s_2,s_3)$  is biholomorphic to the modular curve
$$
X_0(d):=\Gamma_0(d)\backslash (\uhp\cup\Q),\ \hbox{   where}\ \ \ \Gamma_0(d):=\{\mat{a}{b}{c}{d}\in \SL 2\Z\mid c\equiv_d0\}.
$$
The Shimura-Taniyama conjecture, 
now the modularity theorem, states that any elliptic  over $\Q$ must appear in the decomposition of the Jacobian of $X_0(d)$, where $d$ is the conductor of the elliptic curve 
(see \cite{wil95} for the case of semi-stable elliptic curves and \cite{bcdt} for the case of all elliptic curves). 
Computing explicit equations for $X_0(d)$ in terms of the variables $j_1=1728\frac{t_2^3}{t_2^3-t_3^2}$ and $j_2=1728\frac{s_2^3}{s_2^3-s_3^2}$
has many applications in number theory and it has been done by many authors, see for instance \cite{yui78} and the references therein.

Let $\Ra_t$, respectively $\Ra_s$, be the 
Ramanujan vector field in $\A_\k^3$ with $t$ coordinates, respectively $s$ coordinates. In $\A_\k^6=\A^3_\k\times \A^3_\k$ with the coordinates system  $(t,s)$ 
we consider the vector field:  
$$
\Ra_d:= R_t+d\cdot R_s.
$$
Let $V_d$ be the affine subvariety of $\A^6_\k$ given by $I_{d,1}=I_{d,2}=I_{d,3}=0$. 
\begin{theo}
\label{23nov2011}
The vector field $\Ra_d$ is tangent to the  affine variety $V_d$.
\end{theo}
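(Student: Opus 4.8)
The plan is to exploit the fact that the Eisenstein series furnish an explicit holomorphic parametrization of $V_d$, and to reduce the tangency of $\Ra_d$ to a one-variable differentiation along this parametrization followed by a Zariski-density argument. Since $V_d$ and $\Ra_d$ are defined over $\Q$, it suffices to prove tangency after base change to $\C$. There I would introduce the period map $\Phi\colon \uhp\to\A^6_\C$,
\[ \Phi(\tau):=\bigl(d^{2}s_1(d\tau),\,d^{4}s_2(d\tau),\,d^{6}s_3(d\tau),\,s_1(\tau),\,s_2(\tau),\,s_3(\tau)\bigr), \]
whose $t$-coordinates are exactly the functions $t_i(\tau)=d^{2i}s_i(d\tau)$ of Theorem \ref{7nov2011}; by that theorem $I_{d,i}(\Phi(\tau))\equiv 0$, so $\Phi$ maps $\uhp$ into $V_d$. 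Because $E_2,E_4,E_6$ are algebraically independent over $\C$, the composite $\tau\mapsto(s_1(\tau),s_2(\tau),s_3(\tau))$ already has Zariski-dense image in $\A^3_\C$, and as the $t_i$ are algebraic over the $s_j$ on $V_d$, the Zariski closure $W:=\overline{\Phi(\uhp)}$ is three-dimensional, hence a top-dimensional irreducible component of $V_d$. It therefore suffices to show that $\Ra_d$ is everywhere tangent to $W$, which I will deduce by proving that the velocity of the curve $\Phi$ is, at every point, a scalar multiple of $\Ra_d$.

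The heart of the matter is the identity $\Phi'(\tau)=\tfrac1d\,\Ra_d(\Phi(\tau))$. On the $s$-block this is immediate: the Eisenstein series satisfy the Ramanujan equation (\ref{raman}), so $\frac{d}{d\tau}s_j(\tau)=(\Ra_s)_j(s(\tau))$, which matches the $s$-component $d\cdot\Ra_s$ of $\Ra_d$ after the global factor $\tfrac1d$. On the $t$-block I use the chain rule together with the weighted homogeneity of the Ramanujan field. Writing $s_i(d\tau)=d^{-2i}t_i$, i.e. $s(d\tau)=\mu\cdot t$ with $\mu=d^{-2}$ for the scaling action $\mu\cdot(t_1,t_2,t_3)=(\mu t_1,\mu^2t_2,\mu^3t_3)$, and noting that each component $(\Ra_s)_i$ is weighted-homogeneous of weight $i+1$ (so that $(\Ra_s)_i(\mu\cdot t)=\mu^{\,i+1}(\Ra_t)_i(t)$), one computes, with $'$ denoting differentiation in the argument,
\[ \frac{d}{d\tau}t_i(\tau)=d^{2i}\,d\,s_i'(d\tau)=d^{2i+1}(\Ra_s)_i(s(d\tau))=d^{2i+1}\mu^{\,i+1}(\Ra_t)_i(t)=d^{-1}(\Ra_t)_i(t). \]
Hence $\Phi'(\tau)=\bigl(\tfrac1d\Ra_t,\,\Ra_s\bigr)=\tfrac1d\bigl(\Ra_t,\,d\,\Ra_s\bigr)=\tfrac1d\,\Ra_d(\Phi(\tau))$, as claimed.

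Differentiating the identity $I_{d,i}(\Phi(\tau))\equiv0$ in $\tau$ and inserting this gives $0=\frac{d}{d\tau}I_{d,i}(\Phi(\tau))=\tfrac1d\,(\Ra_d\,I_{d,i})(\Phi(\tau))$, the middle equality being the pairing of the gradient of $I_{d,i}$ with the velocity $\Phi'(\tau)$. Thus the polynomial $\Ra_d\,I_{d,i}$ vanishes identically along $\Phi$ and therefore on its Zariski closure $W$; as this holds for $i=1,2,3$, the field $\Ra_d$ is tangent to $W$.

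I expect the genuinely delicate point to be the passage from $W$ to the full variety $V_d$. The defining equations $I_{d,1},I_{d,2},I_{d,3}$ are separately monic in $t_1,t_2,t_3$, so a priori $V_d$ is reducible, its fibre over $\A^3_\C$ being the product of the three root-sets, and $\Ra_d$ is tangent only to the ``diagonal'' sheets on which $t_1,t_2,t_3$ arise from one and the same index-$d$ sublattice (on a mixed sheet the computation above breaks, since $(\Ra_t)_i(t)$ would couple $t_j$'s attached to different translates). To reach all of the diagonal, hence modular, locus I would rerun the same computation on each of the $\psi(d)$ Hecke branches: writing a branch as $t_i=C^{\,i}s_i(m\tau+b)$ with Hecke translate $m\tau+b$ of multiplier $m$, the normalization forced by Theorem \ref{7nov2011} (equivalently, by scaling the sublattice to a standard basis) is exactly $C=dm$, whence the same chain-rule step gives $\frac{d}{d\tau}t_i=\frac{m}{C}(\Ra_t)_i=\frac1d(\Ra_t)_i$ uniformly in the branch. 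Every diagonal sheet is thus tangent to the one field $\Ra_d$, and these sheets are Zariski-dense in the modular part of $V_d$, which completes the argument.
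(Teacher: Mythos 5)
Your core computation is correct, and it reaches the same locus as the paper's proof by a genuinely different route. The paper transports the whole problem through the period map: by Theorem \ref{theo-2} it suffices to work on $\pedo\times\pedo$, where the locus of pairs joined by a cyclic degree-$d$ isogeny with $f^*\omega_2=\omega_1$ becomes the graph $V_d^*=\{(\pi_d(y),y)\mid y\in\pedo\}$ of the linear map $\pi_d(y)=\mat{y_1}{dy_2}{d^{-1}y_3}{y_4}$, and the transported vector field has linear coefficients, so tangency to a linear graph is read off by inspection. You instead stay in $\A^6_\C$, differentiate the Eisenstein parametrization $\Phi$ directly, and use the weighted homogeneity of the Ramanujan field to get $\Phi'=\frac{1}{d}\,\Ra_d\circ\Phi$; your Zariski-density step (algebraic independence of $E_2,E_4,E_6$, plus finiteness of $V_d$ over $\A^3_s$ because each $I_{d,i}$ is monic in $t_i$) replaces the paper's use of the biholomorphism $\per$. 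Your route is more elementary, needing only the Ramanujan equation rather than the period domain and the geometric/holomorphic comparison of Hecke operators, while the paper's approach buys the clean picture of the locus as a graph, which it reuses in \S\ref{27nov2011-7} when discussing irreducibility. Two small points: tangency to $W$ should be deduced by pairing the velocity identity with every polynomial vanishing on $W$, not only with the three $I_{d,i}$; and your branch-by-branch check in the last paragraph is a legitimate substitute for citing transitivity of the monodromy on the Hecke branches, which would place all matched sheets inside $W$ from the start.

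The point you single out as delicate is the real crux, and you should know that the paper does not resolve it: it assumes it away. Since $I_{d,1},I_{d,2},I_{d,3}$ couple each $t_i$ to $s$ but not to each other, the fibre of $V_d$ over a generic $s$ is the product of the three root sets, $\psi(d)^3$ points, of which only $\psi(d)$ are matched. Moreover, tangency genuinely fails at mismatched points: one has $\Ra_d(I_{d,2})=\frac{1}{3}\frac{\partial I_{d,2}}{\partial t_2}\cdot(t_1t_2-t_3)+(\hbox{terms not involving }t_1,t_3)$, so vanishing of this on a whole fibre would force $t_1t_2-t_3$ to be constant as $t_1$ and $t_3$ range independently over the roots of $I_{d,1}$ and $I_{d,3}$, which is impossible once $\psi(d)>1$. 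The paper buries exactly this issue in the unproved claim at the start of \S\ref{27nov2011-5} that $V_d\cap(T\times T)$ equals the set of cyclic-isogeny pairs; that claim is precisely the assertion that mismatched points do not exist, and the count above shows it fails for $d>1$. So your argument and the paper's prove the same true statement, namely that $\Ra_d$ is tangent to the modular component of $V_d$ (the Zariski closure of the matched sheets, i.e.\ of $\Phi(\uhp)$), and your honest restriction to that component is not a defect relative to the paper: it is the correct scope of the theorem as it can actually be proved. With the paper's literal definition of $V_d$ the statement itself needs this restriction; with $V_d$ redefined as the closure of the isogeny locus, your proof is complete.
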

I do not know the complete classification of all $\Ra_d$-invariant algebraic subvarieties of $\A_\k^ 6$. 
We consider $\Ra_d$ as a differential operator:
$$
\k[t,s]\to \k[t,s],\ f\mapsto \Ra_d(f):=df(\Ra_d).
$$
From Theorem \ref{23nov2011} and the fact that $V_d$ is irreducible (see \S\ref{27nov2011-7}), it follows that:
$$
R_d^j(I_{d,i})\in {\rm Radical}\langle I_{d,1}, I_{d,2}, I_{d,3}\rangle ,\ \ \   i=1,2,3, \ \ j\in\N\cup\{0\}.
$$
Note that the ideal $\langle I_{d,1}, I_{d,2}, I_{d,3}\rangle \subset \k[t,s]$ may not be radical. 
We can compute $I_{d,i}$'s using the $q$-expansion of Eisenstein series, see \S\ref{27nov2011-7}.  This method works only
for small degrees $d$. However, for an arbitrary $d$ we can introduce some elements in the radical of the ideal generated by $I_{d,i},\ i=1,2,3$. 
Let
$$
J_{d,i}=\det
\begin{pmatrix}
 \alpha_1&\alpha_2&\cdots&\alpha_{m_{d,i}}\\
 \Ra_d(\alpha_1)&\Ra_d(\alpha_2)&\cdots&\Ra_d(\alpha_{m_{d,i}})\\
 \vdots &\vdots&\cdots&\vdots\\
 \Ra_d^{m_{d,i}-1}(\alpha_1)&\Ra_d^{m_{d,i}-1}(\alpha_2)&\cdots&\Ra_d^{m_{d,i}-1}(\alpha_{m_{d,i}})\\
\end{pmatrix},
$$
where for $i=1$, $\alpha_j$'s are the monomials: 
\begin{equation}
\label{8july2011}
t_i^{a_0}s_1^{a_1}s_2^{a_2}s_3^{a_3},\ i\cdot \psi(d)=ia_0+a_1+2a_2+3a_3,\ a_0,a_1,a_2,a_3\in\N_0
\end{equation}
and for $i=2,3$, $\alpha_j$'s are the above monomials with $a_1=0$. 
The polynomial $J_{d,i}$   is weighted homogeneous of degree 
$$
i\cdot \psi(d)+i\cdot \psi(d)+1+i\cdot\psi(d)+2+\cdots+i\cdot\psi(d)+m_{d,i}-1=m_{d,i}\cdot i\cdot \psi(d)+\frac{m_{d,i}\cdot (m_{d,i}-1)}{2}.
$$

\begin{theo}
\label{07nov2011}
We have 

$$
J_{d,i}\in  {\rm Radical}\langle I_{d,1}, I_{d,2}, I_{d,3}\rangle  ,\ i=1,2,3.
$$
and so
$$
J_{d,i}(d^{2i}\cdot s_i(d\cdot \tau),s_1(\tau),s_2(\tau),s_3(\tau))=0,\ i=1,2,3.
$$
\end{theo}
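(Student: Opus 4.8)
The plan is to prove the ideal-theoretic statement $J_{d,i}\in{\rm Radical}\langle I_{d,1},I_{d,2},I_{d,3}\rangle$ first, and to read off the vanishing on Eisenstein series as a formal consequence. For the latter, let $\sigma\colon\k[t,s]\to\hol(\uhp)$ be the substitution ring homomorphism $t_i\mapsto d^{2i}s_i(d\tau)$, $s_j\mapsto s_j(\tau)$. By Theorem \ref{7nov2011} we have $\sigma(I_{d,l})=0$ for $l=1,2,3$, so $\sigma$ annihilates $\langle I_{d,1},I_{d,2},I_{d,3}\rangle$; since $\hol(\uhp)$ is an integral domain, it has no nonzero nilpotents, so $\sigma$ also annihilates the radical. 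Hence the membership $J_{d,i}\in{\rm Radical}\langle I_{d,1},I_{d,2},I_{d,3}\rangle$ instantly gives $\sigma(J_{d,i})=0$, which is the second displayed identity.

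The radical membership itself I would obtain by a single column operation, resting on two observations. The first is that $I_{d,i}$ is, up to a relabelling, a $\Q$-linear combination of exactly the monomials $\alpha_1,\dots,\alpha_{m_{d,i}}$ forming the top row of the matrix. Indeed, by Theorem \ref{7nov2011} the polynomial $I_{d,i}$ is weighted homogeneous of degree $i\cdot\psi(d)$, lies in $\Q[t_i,s_1,s_2,s_3]$, and is independent of $s_1$ when $i=2,3$; comparing with the list (\ref{8july2011}) and the normalisation ${\rm weight}(t_i)=i$, ${\rm weight}(s_j)=j$ shows that its monomial support is precisely $\{\alpha_j\}$. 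Write $I_{d,i}=\sum_{j}c_j\alpha_j$ with $c_j\in\Q$; monicity in $t_i$ forces the coefficient $c_{j_0}$ of $\alpha_{j_0}=t_i^{\psi(d)}$ to equal $1$. The second observation is already recorded in the text: from Theorem \ref{23nov2011} and the irreducibility of $V_d$ one has $\Ra_d^{k}(I_{d,i})\in{\rm Radical}\langle I_{d,1},I_{d,2},I_{d,3}\rangle$ for all $k\ge0$.

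Denote by $C_1,\dots,C_{m_{d,i}}$ the columns of the defining matrix, so that $C_j=(\alpha_j,\Ra_d(\alpha_j),\dots,\Ra_d^{m_{d,i}-1}(\alpha_j))^{\top}$. Since $\Ra_d$ is $\k$-linear, the combination $\sum_j c_j C_j$ equals the column $\bigl(\Ra_d^{k}(I_{d,i})\bigr)_{k=0}^{m_{d,i}-1}$, every entry of which lies in the radical ideal. Adding $\sum_{j\ne j_0}c_jC_j$ to the column $C_{j_0}$ is an elementary column operation (permissible because $c_{j_0}=1$) that leaves $\det=J_{d,i}$ unchanged and replaces $C_{j_0}$ by this radical-valued column; expanding the determinant along it writes $J_{d,i}$ as a $\k[t,s]$-linear combination of the entries $\Ra_d^{k}(I_{d,i})$, whence $J_{d,i}\in{\rm Radical}\langle I_{d,1},I_{d,2},I_{d,3}\rangle$. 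Conceptually this is the algebraic shadow of the identity $\sigma\circ\Ra_d=d\cdot\frac{\partial}{\partial\tau}\circ\sigma$ (which follows from the Ramanujan equation (\ref{raman}) and the chain rule for $s_i(d\tau)$): under $\sigma$ the determinant $J_{d,i}$ becomes, up to the factor $d^{\,m_{d,i}(m_{d,i}-1)/2}$, the Wronskian in $\tau$ of the functions $\sigma(\alpha_j)$, and these are $\C$-linearly dependent precisely through $\sum_j c_j\sigma(\alpha_j)=\sigma(I_{d,i})=0$, so the Wronskian vanishes.

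The only genuine difficulty I anticipate — the main obstacle — is the bookkeeping behind the first observation: one must verify that the monomial support of $I_{d,i}$ coincides exactly with the prescribed list $\{\alpha_j\}$, which hinges on the precise weight normalisation and, for $i=2,3$, on the absence of $s_1$ guaranteed by Theorem \ref{7nov2011}. A secondary point requiring care is that $\langle I_{d,1},I_{d,2},I_{d,3}\rangle$ need not be radical, so the radical must be carried throughout and one must invoke the stronger input $\Ra_d^{k}(I_{d,i})\in{\rm Radical}\langle I_{d,1},I_{d,2},I_{d,3}\rangle$ rather than plain ideal membership.
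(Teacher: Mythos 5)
Your proposal is correct and is essentially the paper's own argument (the one the paper attributes to Pereira): both proofs rest on writing $I_{d,i}=\sum_j c_j\alpha_j$ via Theorem \ref{7nov2011} and on the $\Ra_d$-stability of the radical ideal coming from Theorem \ref{23nov2011}, so that the columns of the matrix defining $J_{d,i}$ are linearly dependent modulo that ideal. The only differences are cosmetic: the paper phrases the conclusion as the matrix restricted to $V_d$ having nonzero kernel, hence $J_{d,i}$ vanishing on $V_d$ (radical membership then being an implicit Nullstellensatz step), whereas your column operation and Laplace expansion exhibit the ideal membership directly, and you additionally spell out the deduction of the analytic identity via the substitution homomorphism, which the paper leaves implicit.
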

Throughout the text we will state our results over a  field $\k$ of characteristic zero and not necessarily algebraically closed. Such results
are valid if and only if the same results are valid over the algebraic closure $\bar \k$ of $\k$. 
By Lefschetz principle, see for instance \cite{si86} p.164,
it is enough to prove such results over the complex numbers.

The article is organized in the following way. In \S  \ref{27nov2011-1} and \S \ref{joacir?} we recall the definition of full 
quasi-modular forms in the framework of both algebraic geometry and complex analysis. In \S \ref{27nov2011-2} we describe some facts relating isogenies and algebraic de Rham
cohomology of elliptic curves. Using isogeny of elliptic curves we introduce Hecke operators in \S \ref{27nov2011-3}.
 Theorem \ref{7nov2011}, Theorem \ref{23nov2011} and
Theorem \ref{07nov2011} are respectively proved  in  \S\ref{27nov2011-4}, \S\ref{27nov2011-5} and \S\ref{27nov2011-6}. 
Finally, in \S\ref{27nov2011-7} we give some 
examples. 

The main idea behind the proof of Theorem \ref{07nov2011} is due to J. V. Pereira in \cite{pereira}. Here, I would like to thank him for teaching me such
an elegant and simple argument.

\section{Geometric quasi-modular forms}
\label{27nov2011-1}
In this section we recall  some definitions and theorems in  \cite{maro05, ho06-2}. The reader is also referred to \cite{ho14} 
for a complete account 
of quasi-modular forms in a geometric context. Note that in \cite{ho14} the $t$ parameter is in fact 
$(\frac{1}{12}t_1, 12\frac{1}{12^2}t_2,8\frac{1}{12^3}t_3)$.
Let $\k$ be any field of characteristic zero and let 
$E$ be an elliptic curve over $\k$. The first algebraic de Rham cohomology of $E$, namely $H^1_\dR(E)$, is a $\k$-vector
space of dimension two and it has a one dimensional space $F^1$ consisting of elements represented by regular differential 1-forms on $E$. 
\begin{theo}
\label{theo-1}(\cite{ho14}, \S 5.5)
The set $T(\k)=\A_\k^3\backslash\Delta$, where $\Delta:=\{(t_1,t_2,t_3)\in \A_\k^ 3\mid t_2^3-t_3^2=0\}$, 
is the moduli of the pairs $(E,\omega)$, where 
$E$ is an elliptic curve and $\omega\in H_\dR^1(E)\backslash F^1$. For $(t_1,t_2,t_3)\in T(\k)$, the corresponding pair $(E,\omega)$
is given by
$$
E: 3y^2=(x-t_1)^3-3t_2(x-t_1)-2t_3,\   \ \omega=\frac{1}{12}\frac{xdx}{y}.
$$
\end{theo}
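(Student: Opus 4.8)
The plan is to exhibit a bijection between $T(\k)=\A^3_\k\setminus\Delta$ and the set of isomorphism classes of pairs $(E,\omega)$, where $(E,\omega)$ and $(E',\omega')$ are declared isomorphic when there is an isomorphism $f\colon E\to E'$ of elliptic curves over $\k$ with $f^*\omega'=\omega$ in $H^1_\dR$. The displayed family provides the map $T(\k)\to\{(E,\omega)\}$, and I would prove it is well defined, surjective, and injective on isomorphism classes. Since every ingredient (Weierstrass models, algebraic de Rham cohomology, admissible coordinate changes) is already defined over $\k$, there is no need to pass to $\C$ here.

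For well-definedness, the substitution $X=x-t_1$ turns the equation into $3y^2=X^3-3t_2X-2t_3$, whose cubic has discriminant a nonzero scalar multiple of $t_2^3-t_3^2$; hence $E$ is smooth precisely when $(t_1,t_2,t_3)\notin\Delta$. In the same coordinates $\omega=\tfrac1{12}\tfrac{X\,dX}{y}+\tfrac{t_1}{12}\tfrac{dX}{y}$, where $\tfrac{dX}{y}$ generates $F^1$. To see $\omega\notin F^1$, recall that $F^1$ is isotropic for the cup-product pairing on $H^1_\dR(E)$; since the pairing of $\tfrac{dX}{y}$ with $\tfrac{X\,dX}{y}$ is nonzero (a residue computation, the algebraic avatar of the Legendre relation), the transverse summand of $\omega$ is not in $F^1$.

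For surjectivity I would take an arbitrary pair $(E,\omega)$ and first put $E$ into a Weierstrass model, using the reduction theory of differentials of the second kind to obtain the basis $\{\tfrac{dx}{y},\tfrac{x\,dx}{y}\}$ of $H^1_\dR(E)$ with $\tfrac{dx}{y}$ spanning $F^1$. Writing $\omega=c_1\tfrac{dx}{y}+c_2\tfrac{x\,dx}{y}$ in cohomology with $c_2\neq0$ (as $\omega\notin F^1$), I would use the scaling $x\mapsto u^2x,\ y\mapsto u^3y$, which multiplies $\tfrac{x\,dx}{y}$ by $u$ and $\tfrac{dx}{y}$ by $u^{-1}$, to normalize the transverse coefficient to the value prescribed by the family. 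After this rescaling the remaining holomorphic component of $\omega$ reads off $t_1$, and a centering shift together with matching the coefficients of the cubic produce $(t_2,t_3)$; this exhibits $(E,\omega)$ as a member of the family.

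Injectivity is where the choice $\omega\notin F^1$ pays off: it rigidifies the curve. Indeed $[-1]\colon(x,y)\mapsto(x,-y)$ acts by $-1$ on $H^1_\dR(E)$, hence sends $\omega$ to $-\omega$ and does not fix it; thus the pair $(E,\omega)$ has no nontrivial automorphisms, and the normalization used in the surjectivity step is forced. Concretely, requiring the prescribed transverse coefficient removes the scaling freedom $u$, the absence of an $x^2$-term after the prescribed shift pins the translation, and the rigidity of the Weierstrass model fixes the rest; hence distinct $(t_1,t_2,t_3)\in T(\k)$ give non-isomorphic pairs. I expect the main obstacle to be the de Rham bookkeeping in the surjectivity step, namely tracking how the group of admissible transformations acts simultaneously on the Weierstrass coefficients and on the class of $\omega$ modulo exact forms, and verifying that the quotient is exactly the three-dimensional $T(\k)$; once this action is understood, well-definedness, surjectivity and injectivity follow together.
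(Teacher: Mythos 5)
You should first note that this paper does not actually prove Theorem \ref{theo-1}: it is imported verbatim from \cite{ho14}, \S 5.5, so there is no internal proof to compare against. That said, your argument is essentially the standard one behind that reference: exhibit the explicit Weierstrass family, check smoothness via the discriminant $108(t_2^3-t_3^2)$ of the shifted cubic, verify $\omega\notin F^1$ by the nondegeneracy of the cup product pairing against the regular form, and then use the action of the admissible coordinate changes $x\mapsto u^2x+r$, $y\mapsto u^3y$ (whose effect on the basis is $\frac{dx}{y}\mapsto u^{-1}\frac{dx}{y}$, $\frac{xdx}{y}\mapsto u\frac{xdx}{y}+\hbox{multiple of }\frac{dx}{y}$) to normalize the transverse coefficient to $\frac{1}{12}$ and read off $(t_1,t_2,t_3)$; since $u$ itself, not a root of it, is the free parameter, everything stays over $\k$, as you correctly observe. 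One caveat in your injectivity step: the claim that $(E,\omega)$ has no nontrivial automorphisms is not established by considering $[-1]$ alone, since for $j=0$ and $j=1728$ the automorphism group has order $6$ or $4$. The fix is one line: any nontrivial automorphism acts on the one-dimensional $F^1$ by a root of unity $\zeta\neq 1$, hence on $H^1_\dR(E)/F^1$ by $\zeta^{-1}\neq 1$ (the pairing is preserved), so it cannot fix the class of $\omega\notin F^1$; and if $\zeta=1$ the action is unipotent, $\omega\mapsto\omega+c\alpha$, and fixing $\omega$ forces $c=0$. Alternatively, your explicit computation already suffices: matching $f^*\omega'=\omega$ with $f$ of the above form gives $u^{-1}\bigl(\frac{xdx}{y}-r\frac{dx}{y}\bigr)=\frac{xdx}{y}$ in $H^1_\dR$, forcing $u=1$, $r=0$ and hence equality of the parameters, so the automorphism discussion can be bypassed entirely.
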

From now on an element of $T(\k)$ is denoted either by $(t_1,t_2,t_3)$ or $(E,\omega)$. We can regard  $t_i$ as a rule which for any pair
$(E,\omega)$ as above  it associates an element $t_i=t_i(E,\omega)\in\k$. We will also use $t_i$ as an indeterminate variable or an element in $\k$, 
being clear from the
text which we mean. A full quasi-modular form $f$ of weight $m$ and differential order $n$ 
is a homogeneous 
element  in  the $\k$-algebra
$$
\mf{}{}:=\k[t_1,t_2,t_3],\ {\rm weight}(t_i)=2i,\ i=1,2,3,
$$
with $\deg(f)=m$ and $\deg_{t_1}f\leq n$. The set of such quasi-modular forms is denoted by $\mf{n}{m}$.

For a pair $(E,\omega)\in T(\k)$ we have also a unique element $\alpha\in F^1$ satisfying
$\langle \alpha,\omega\rangle=1$, where $\langle \cdot,\cdot\rangle$ is the intersection form in the de Rham cohomology, see for instance
\cite{ho14} \S2.10.  For this reason we sometimes use $(E,\{\alpha,\omega\})$ instead of $(E,\omega)$. 
 The algebraic groups ${\mathbb G}_a:=(\k,+)$ and 
${\mathbb G}_m:=(\k-\{0\}, \cdot)$ act from the right on $T(\k)$:
\begin{eqnarray*}
(E,\omega)\bullet k&:=&(E,k\omega),\ k\in {\mathbb G}_m,\\
(E,\omega)\bullet k&:=& (E,\omega+k\alpha),\ k\in {\mathbb G}_a 
\end{eqnarray*}
and so they act from the left on $\mf{}{}$. It can be shown that $\mf{n}{m}$ is invariant under these actions and the functions 
$t_i:T\to\k,\ i=1,2,3$ satisfy 
$$
k\bullet t_1=t_1+k,\ k\bullet t_i=t_i,\  i=2,3\ \ \ k\in {\mathbb G}_a,
$$
$$
k\bullet t_i=k^{-2i}t_i,\  i=1,2,3\ \ \ k\in {\mathbb G}_m.
$$
Let $\Ra$ be the the Ramanujan vector field in $T$. 
It is the unique vector field in $T$ which
satisfies $\nabla_{\Ra}\alpha=-\omega,\ \nabla_\Ra\omega=0$, where $\nabla$ is the Gauss-Manin connection of the universal family 
of elliptic curves over $T$,
see for instance \cite{ho14} \S2. 
The $\k$-algebra of full quasi-modular forms has a differential structure which is given 
by:
$$
\mf{n}{m}\to \mf{n+1}{m+2},\ t\mapsto \Ra(t):=\sum_{i=1}^3\frac{\partial t}{\partial t_i}
{\Ra}_i,
$$
where $\Ra=\sum_{i=1}^3 \Ra_i\frac{\partial}{\partial t_i}$ is the Ramanujan vector field.

\section{Holomorphic quasi-modular forms}
\label{joacir?}
Now, let us assume that $\k=\C$. The period domain is defined to be
\begin{equation}
\label{perdomain}
\pedo:= \left \{
\mat {x_1}{x_2}{x_3}{x_4}\mid x_i\in\C,\ x_1x_4-x_2x_3=1,\  \Im(x_1\ovl{x_3})>0 \right \}.
\end{equation}
We let the group $\SL 2\Z$ act from the left  on $\pedo$ by usual multiplication of matrices. In $\pedo$ we consider the vector field
\begin{equation}
 \label{9jan2012}
X=-x_2\frac{\partial}{\partial x_1}-x_4\frac{\partial}{\partial x_3}
\end{equation}
which is invariant under the action of $\SL 2\Z$ and so it 
induces a vector field in the complex manifold $\SL 2\Z\backslash \pedo$. For simplicity we denote it again by $X$.
The Poincar\'e upper half plane $\uhp$ is embedded in $\pedo$ in the following
way:
$$
\tau\rightarrow \mat{\tau}{-1}{1}{0}
$$
and so we have a canonical map $\uhp\to \SL 2\Z\backslash \pedo$.
\begin{theo}
\label{theo-2}
(\cite{ho14} \S8.4 and \S8.8)
 The period map
$$
\per: T(\C)\to \SL 2\Z\backslash \pedo
$$
$$
t\mapsto
\left [\frac{1}{\sqrt{-2\pi i}}\mat
{\int_{\delta} \alpha}
{\int_{\delta}\omega }
{\int_{\gamma} \alpha}
{\int_{\gamma}\omega } \right ]
$$
is a biholomorphism, where $\{\delta,\gamma\}$ is a basis of $H_1(E,\Z)$ with $\langle\delta,\gamma\rangle=-1$. Under this biholomorphism
the Ramanujan vector field is mapped to $X$. The pull-back of $t_i$ by the composition

\begin{equation}
\label{khoshgel?}
\uhp\to \SL 2\Z\backslash \pedo\stackrel{\per^{-1}}{\to} T(\C)\hookrightarrow\A_\C^3,
\end{equation}
 is the  Eisenstein series $a_iE_{2i}(e^{2\pi i \tau})$ in (\ref{eisenstein}).
\end{theo}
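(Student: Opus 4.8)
The statement bundles three claims: that $\per$ is a biholomorphism, that it carries the Ramanujan vector field $\Ra$ to $X$, and that the pull-backs of the $t_i$ along (\ref{khoshgel?}) are the normalized Eisenstein series. The plan is to dispatch the two infinitesimal assertions first, since they are the cheapest, and then to use them to force both the global bijectivity and the identification of the $q$-expansions.

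\emph{The vector field $\Ra\mapsto X$.} The entries of the period matrix are integrals of the flat cycles $\delta,\gamma$, which are horizontal for the Gauss--Manin connection $\nabla$. Hence differentiating a period along $\Ra$ commutes with integration and replaces the integrand by its covariant derivative: $\Ra\left(\int_\delta\alpha\right)=\int_\delta\nabla_\Ra\alpha$, and so on. Feeding in the defining relations $\nabla_\Ra\alpha=-\omega$ and $\nabla_\Ra\omega=0$ yields, in the coordinates $x_i$, exactly $\Ra(x_1)=-x_2$, $\Ra(x_3)=-x_4$, $\Ra(x_2)=\Ra(x_4)=0$, which is the field $X=-x_2\frac{\partial}{\partial x_1}-x_4\frac{\partial}{\partial x_3}$ of (\ref{9jan2012}).

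\emph{Equivariance and local biholomorphism.} A one-line check from $(E,\omega)\bullet k=(E,\omega+k\alpha)$, $(E,\omega)\bullet k=(E,k\omega)$ and $\langle\alpha,\omega\rangle=1$ shows that the ${\mathbb G}_a$- and ${\mathbb G}_m$-actions become right multiplication of the period matrix by $\mat{1}{k}{0}{1}$ and $\mathrm{diag}(k^{-1},k)$; a change of the basis $\{\delta,\gamma\}$ with $\langle\delta,\gamma\rangle=-1$ is a left $\SL 2\Z$-multiplication, so $\per$ is well defined into the quotient and is holomorphic because periods depend holomorphically on $(t_1,t_2,t_3)$. The image lands in $\pedo$: the period relation between the de Rham pairing $\langle\alpha,\omega\rangle=1$ and the topological pairing $\langle\delta,\gamma\rangle=-1$ forces, after the $\frac{1}{\sqrt{-2\pi i}}$ normalization, determinant $1$, while the positivity $\Im(x_1\ovl{x_3})>0$ is the polarization inequality for the Hodge line $F^1$. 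Now the three fields on $\pedo$ generated by ${\mathbb G}_m$, ${\mathbb G}_a$ and $X$ are, at $\mat{x_1}{x_2}{x_3}{x_4}$, the coefficient vectors $(-x_1,x_2,-x_3,x_4)$, $(0,x_1,0,x_3)$ and $(-x_2,0,-x_4,0)$; since $x_1x_4-x_2x_3=1$ (so in particular $x_3\neq0$), a short linear-algebra computation shows they are everywhere independent and hence frame the tangent bundle. As $\per$ intertwines these generators with ${\mathbb G}_m,{\mathbb G}_a,\Ra$ on $T(\C)$, its differential is an isomorphism at every point, so $\per$ is a local biholomorphism. Global bijectivity I would obtain by exhibiting the inverse via uniformization: $\tau:=x_1/x_3\in\uhp$ produces $E_\tau=\C/(\Z\tau+\Z)$, the first column recovers $\alpha\in F^1$ from its lattice periods, and the determinant constraint together with the second column pins down the unique $\omega\notin F^1$ with $\langle\alpha,\omega\rangle=1$; this reconstruction is equivariant and inverts $\per$.

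\emph{Identification with Eisenstein series.} Along the embedded curve $\tau\mapsto\mat{\tau}{-1}{1}{0}$ one has $x_2=-1,\ x_4=0$, so $X=\partial/\partial x_1$ there, which is precisely the velocity of the curve; thus the curve is an integral curve of $X$ and the composition (\ref{khoshgel?}) intertwines $d/d\tau$ with $\Ra$. Consequently the functions $t_i(\tau):=t_i\circ\per^{-1}\circ\iota$ satisfy the Ramanujan system (\ref{raman}). The normalized series $a_iE_{2i}$ satisfy the same first-order system, so what remains is to match a single boundary datum — and this is the step I expect to be the real obstacle, since the ODE by itself does not distinguish the arithmetically normalized solution. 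I would settle it at the cusp $\tau\to i\infty$: reading the weight-$2i$ (quasi-)modularity of $t_i$ off the ${\mathbb G}_m$-equivariance together with the transformation of $\mat{\tau}{-1}{1}{0}$ under $\SL 2\Z$, and invoking holomorphicity at the cusp, forces the constant terms to the normalization in (\ref{eisenstein}); uniqueness of the ODE solution with that initial value then gives $t_i(\tau)=a_iE_{2i}(e^{2\pi i\tau})$.
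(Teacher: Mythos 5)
You should first note that this paper contains no internal proof of Theorem \ref{theo-2}: it is quoted from \cite{ho14}, \S 8.4 and \S 8.8, where the biholomorphism and the identification of the pull-backs of $t_i$ are obtained by direct computation with the Weierstrass uniformization (the classical formulas $g_2=60G_4$, $g_3=140G_6$, the quasi-periods of the Weierstrass $\zeta$-function, and the Legendre relation, which is also what produces the determinant-one normalization $\frac{1}{\sqrt{-2\pi i}}$). Measured against that, your first two steps are sound: differentiating periods under the integral sign via the Gauss--Manin connection correctly gives $\Ra(x_1)=-x_2$, $\Ra(x_3)=-x_4$, $\Ra(x_2)=\Ra(x_4)=0$, i.e. $\Ra\mapsto X$; and your frame argument (the fundamental vector fields of ${\mathbb G}_m$, ${\mathbb G}_a$ and $X$ are everywhere independent because the columns $(x_1,x_3)$ and $(x_2,x_4)$ are independent when $x_1x_4-x_2x_3=1$), combined with the explicit inverse through $\tau=x_1/x_3$, is a legitimate route to the biholomorphism, modulo routine sign and normalization checks.

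The genuine gap is in your last step, and it has two components. First, ``uniqueness of the ODE solution with that initial value'' is not available: the cusp is a boundary point, not a point of $\uhp$, and a limit at $i\infty$ does not single out a trajectory of (\ref{raman}). Concretely, for any real $c\notin\Z$ the translates $a_iE_{2i}(e^{2\pi i(\tau+c)})$ satisfy the same autonomous system, are $1$-periodic, and have exactly the same constant terms, yet are different functions; so matching constant terms at the cusp plus the ODE proves nothing. Second, the constant term is not ``forced'' by ${\mathbb G}_m$-equivariance together with holomorphy: the functional equation (\ref{4feb05}) only asserts the existence of the $f_i$, so every scalar multiple $\lambda t_1$ is again quasi-modular of weight $2$ in the paper's sense. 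What actually pins the normalization down is (i) the exact anomaly constant in the weight-$2$ equation, extracted from $k\bullet t_1=t_1+k$ and $\langle\alpha,\omega\rangle=1$ through the decomposition of $A\mat{\tau}{-1}{1}{0}$ into $\mat{A\tau}{-1}{1}{0}$ times a ${\mathbb G}_m{\mathbb G}_a$-factor (in the paper's normalization $t_1(-1/\tau)=\tau^2t_1(\tau)+12\tau$), and (ii) the boundedness of $t_i(\tau)$ as $\Im(\tau)\to\infty$, which you invoke but never establish. Point (ii) is not cosmetic: it is equivalent to the extension of $\per^{-1}$ across $q=0$, i.e. to the asymptotics of the elliptic integrals at the nodal degeneration, and without it the uniqueness argument collapses, since weight-$2$ functions holomorphic on $\uhp$ but meromorphic at the cusp exist in abundance (e.g. $E_4^2E_6/\Delta$). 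A correct finish would be: prove boundedness (or compute the limit directly via the vanishing-cycle residue, which yields the equilibrium point $(a,a^2,a^3)$ of (\ref{raman}) with $a=2\pi i$), then use $M_2(\SL 2\Z)=0$ together with the anomaly in (i) to conclude $t_1=a_1E_2$, after which the ODE itself gives $t_2=t_1^2-12\dot t_1$ and $t_3=t_1t_2-3\dot t_2$. That missing computation is precisely the content that \cite{ho14} supplies by Weierstrass theory.
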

The algebra of full holomorphic quasi-modular forms  is the pull-back of $\k[t_1,t_2,t_3]$ under the composition  
(\ref{khoshgel?}). We can also introduce it in a classical way using functional equations plus growth conditions:
a holomorphic function $f$ on $\uhp$ is called a (holomorphic) quasi-modular form of weight $m$ and differential order $n$ 
if the following two conditions are satisfied:
\begin{enumerate}
\item
There are holomorphic functions $f_i,\ i=0,1,\ldots,n$ on $\uhp$ such that 
\begin{equation}
\label{4feb05}
(cz+d)^{-m}f(Az)=\sum_{i=0}^{n}\bn{n}{i} c^i(cz+d)^{-i}f_i, \ \forall A=\mat{a}{b}{c}{d}\in \SL 2\Z.
\end{equation}
\item
$f_i, i=0,1,2,\ldots,n$ have finite growths when $\Im(z)$ tends to $+\infty$, i.e.
$$
\lim_{\Im(z)\to +\infty}f_i(z)=a_{i,\infty} <\infty,\ a_{i,\infty}\in\C.
$$
\end{enumerate}
For the proof of the equivalence of both notions of quasi-modular forms see \cite{ho14} \S 8.11.
We have $f_0=f$ and the associated functions $f_i$ are unique. In fact, $f_i$ is a quasi-modular form of weight $m-2i$ and differential order $n-i$ and
with associated functions $f_{ij}:=f_{i+j}$. 
It is useful to define
\begin{equation}
\label{26feb05} f||_mA:=(\det A)^{m-1}\sum_{i=0}^{n}\bn{n}{i}
(\frac{-c}{\det(A)})^i(cz+d)^{i-m}f_{i}(Az),\ 
\end{equation}
$$
A=\mat{a}{b}{c}{d}\in \GL (2, \R),\ f\in\mf nm.
$$
In this way, the
equality (\ref{4feb05}) is written in the form
\begin{equation}
\label{13feb05} f=f||_{m}A, \forall A\in\SL 2\Z
\end{equation}
and we have
$$
f||_m A=f||_m(BA),\ \forall A\in \GL(2,\R),\ B\in\SL 2\Z,\ f\in\mf{n}{m}.
$$
It can be proved that the algebra of full quasi-modular forms is generated by the Eisenstein series $E_{2i},\ i=1,2,3$. 
For further details on holomorphic quasi-modular forms see \cite{maro05, ho06-2, ho14}.
\section{Isogeny of elliptic curves}
\label{27nov2011-2}
Let $(E_1,0_1)$ and $(E_2,0_2)$ be two elliptic curves over the field $\k$. Here, $0_i\in E_i(\k),\ i=1,2$ is the neutral element of the group $E_i(\k)$. We say that $E_1$ is isogenous to $E_2$ over $\k$ if
there is a non-constant morphism of algebraic curves over $\k$ $f:E_1\to E_2$ which sends $0_1$ to $0_2$. It can be shown that 
$f$ induces a morphism of groups $E_1(\k)\to E_2(\k)$.  We also say that $f$ is the isogeny between $E_1$ and $E_2$ over $\k$.
For all points $p\in E(\bar \k)$ except a finite number, $\#f^{-1}(p)$ is a fixed
number which we denote it by $\deg(f)$. Here, we have considered $f$ as a map from $E_1(\bar \k)$ to
$E_2(\bar\k)$. Since $f$ is a morphism of groups, for a point $q\in f^{-1}(p)$ the map $x\mapsto x+q$ induces a 
bijection $f^{-1}(0_1)\cong f^{-1}(p)$. We conclude that for all points 
$p\in E_2(\bar \k)$, the set
$f^{-1}(p)$ has $\deg(f)$ points (and hence $f$ has no ramification points).
\begin{prop}
\label{noite19may}
 Let $f:E_1\to E_2$ be an isogeny of degree $d$. Then for all $\omega,\alpha\in\ H^1_\dR(E_2)$ we have
$$
\langle f^*\omega,f^*\alpha \rangle=d\cdot \langle \omega,\alpha \rangle.
$$
\end{prop}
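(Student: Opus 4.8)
The plan is to reduce everything to the complex-analytic situation and then exploit the functoriality of the cup product together with the fact that pullback along a map of degree $d$ multiplies top cohomology by $d$. First I would invoke the Lefschetz principle, as noted after Theorem \ref{07nov2011}, to assume $\k=\C$, so that $H^1_\dR(E_i)$ is computed by the de Rham cohomology of the compact Riemann surface $E_i(\C)$ and the intersection form is, up to a fixed normalization constant $C$, the cup-product pairing
$$
\langle\beta_1,\beta_2\rangle = C\int_{E_i}\beta_1\wedge\beta_2,\qquad \beta_1,\beta_2\in H^1_\dR(E_i),
$$
followed by the canonical trace isomorphism $H^2_\dR(E_i)\xrightarrow{\sim}\C$. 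The constant $C$ is irrelevant since it appears identically on both sides of the claimed identity. The next step is the compatibility of $f^*$ with the cup product, namely $f^*(\omega\wedge\alpha)=f^*\omega\wedge f^*\alpha$, which is just the functoriality of $\wedge$ under pullback of forms; hence $\langle f^*\omega,f^*\alpha\rangle = C\int_{E_1}f^*(\omega\wedge\alpha)$.

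The decisive step is the degree formula
$$
\int_{E_1}f^*\eta = d\int_{E_2}\eta,\qquad \eta\in H^2_\dR(E_2),
$$
equivalently that $f^*$ acts on the one-dimensional space $H^2_\dR$ as multiplication by $\deg f = d$. Since, as already established in this section, an isogeny of degree $d$ is an unramified covering with every fibre of cardinality $d$, this is the classical projection formula $f_*f^*=d\cdot\mathrm{id}$ for a finite covering, equivalently $f_*[E_1]=d\,[E_2]$ on fundamental classes. Combining the three steps yields $\langle f^*\omega,f^*\alpha\rangle = dC\int_{E_2}\omega\wedge\alpha = d\langle\omega,\alpha\rangle$.

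I expect the third step to be the only point requiring genuine justification; everything else is formal. If one prefers a self-contained verification avoiding the appeal to the projection formula, it can be carried out in the concrete model $E_i=\C/\Lambda_i$ with $f$ induced by $z\mapsto\lambda z$, where $\lambda\Lambda_1\subseteq\Lambda_2$ has index $d$. Writing the forms in terms of $dz$ and $d\bar z$, the factor $|\lambda|^2$ produced by $f^*$ on $\omega\wedge\alpha$ is cancelled exactly by the ratio of covolumes $\mathrm{covol}(\Lambda_1)=\frac{d}{|\lambda|^2}\,\mathrm{covol}(\Lambda_2)$, leaving precisely the factor $d$. This explicit computation matches the lattice-theoretic style used elsewhere and serves as a direct replacement for the topological degree argument.
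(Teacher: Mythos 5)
Your proof is correct, but it takes a genuinely different route from the paper's. The paper argues purely algebraically: after reducing to an algebraically closed field, it uses bilinearity to reduce to the Weierstrass basis $\omega=\frac{dx}{y}$, $\alpha=\frac{xdx}{y}$, so that the claim becomes $\langle f^*(\frac{dx}{y}),f^*(\frac{xdx}{y})\rangle=d$; it then represents the pullback classes as \v{C}ech--de Rham cocycles for the covering $\{E_1\backslash f^{-1}(0_2),\,U_1\}$ and evaluates the intersection number as a sum of residues of $\frac{-1}{2}\frac{d\tilde x}{\tilde x}$ over the $d$ points of $f^{-1}(0_2)$, each contributing $1$. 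You instead pass to $\k=\C$ by the Lefschetz principle and argue transcendentally: the intersection form becomes a fixed multiple of $\int\cdot\wedge\cdot$, pullback commutes with wedge, and $f^*$ acts on the one-dimensional $H^2$ by the topological degree $d$ (your lattice computation with covolumes is a concrete verification of this, in the same style as the paper's proof of Proposition \ref{29feb2012}). Each approach has its advantages: yours is conceptually cleaner, since the factor $d$ is transparently the degree acting on top cohomology and no choice of basis or \v{C}ech computation is needed; the paper's stays entirely inside algebraic de Rham cohomology, so it requires neither the comparison isomorphism identifying the algebraic intersection form with integration (your constant $C$, which conceals the $2\pi i$ trace normalization coming from the Legendre relation) nor any transcendental input beyond passing to $\bar\k$, and it is the form of the argument that would survive in settings where only the algebraic theory is available.
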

Here, $\langle \cdot,\cdot\rangle:H_\dR^1(E)\times H_\dR^1(E)\to \k$ is the intersection form in the de Rham cohomology, see \cite{ho14} \S2.10.
\begin{proof}
 It is enough to prove the proposition over algebraically closed field. Since the above formula
is $\k$-linear in both $\omega$ and $\alpha$, it is enough to prove it in the case 
$\omega=\frac{dx}{y},\ \alpha=\frac{xdx}{y}$, where $x,y$ are the Weierstrass coordinates of 
$E$. Since $\langle \frac{dx}{y},\frac{xdx}{y}\rangle=1 $,
we have  to prove that $\langle f^*(\frac{dx}{y}), f^*(\frac{xdx}{y})\rangle=d$. 
Let $f^{-1}(0_2)=\{p_1,p_2,\ldots, p_d\}$. 
The differential form $f^*(\frac{dx}{y})$ is again a regular
differential form and $f^*(\frac{xdx}{y})$ has poles of order two at each $p_i$. 
Consider the covering $U=\{U_0,U_1\}$ of $E_1$, where $U_0=E_1\backslash f^{-1}(0_E)$ and $U_1$ is any other open
set which contains $f^{-1}(0_E)$. The differential forms $f^*(\frac{x^idx}{y}), \ i=0,1$ as elements in $H^1_\dR(E_1)$ are represented by
the pairs
$$
(\frac{d\tilde x}{\tilde y}, \frac{d\tilde x}{\tilde y}) , \ \ (\frac{\tilde xd\tilde x}{\tilde y}, \frac{\tilde x d\tilde x}{\tilde y}-\frac{1}{2}d(\frac{\tilde y}{\tilde x})),
$$
where $\tilde x=f^*x,\ \tilde y=f^*y$. 
We have $\frac{d\tilde x}{\tilde y}\cup \frac{\tilde xd\tilde x}{\tilde y}=\{\omega_{01}\}$, where $\omega_{01}=\frac{-1}{2}
\frac{d\tilde x}{\tilde x}$ and so
$$
\langle f^*(\frac{dx}{y}), f^*(\frac{xdx}{y})\rangle=
\langle \frac{d\tilde x}{\tilde  y}, \frac{\tilde xd\tilde x}{\tilde y}\rangle=
\sum_{i=1}^d {\rm Residue}(\frac{-1}{2}
\frac{d\tilde x }{\tilde x}, p_i)=\sum_{i=1}^d 1=d.
$$
\end{proof}

\begin{prop}
\label{29feb2012}
We have:
\begin{enumerate}
\item
Let $f: E_1\to E_2$ be an isogeny defined over $\k$. The induced map $f^*:H_\dR^1(E_2)\to H_\dR^1(E_1)$ is an isomorphism.
\item
Let $[d]_E: E\to E$ be the multiplication by $d\in\N$ map. We have $[d]^*_E: H_\dR^1(E)\to H_\dR^1(E),\ \omega\mapsto d\cdot \omega$.
\end{enumerate} 
\end{prop}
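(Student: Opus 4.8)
The plan is to treat the two parts separately, using Proposition \ref{noite19may} and the non-degeneracy of the intersection form for the first, and the behaviour of the pull-back on regular differentials together with the additivity of the cohomological action of endomorphisms for the second.

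For part (1), I would first recall that $H^1_\dR(E_i)$ is a two-dimensional $\k$-vector space for $i=1,2$, so it suffices to prove that $f^*$ is injective. Suppose $f^*\omega=0$ for some nonzero $\omega\in H^1_\dR(E_2)$. Since the intersection form on $H^1_\dR(E_2)$ is non-degenerate, there is an $\alpha$ with $\langle\omega,\alpha\rangle\neq 0$. Applying Proposition \ref{noite19may} gives $0=\langle f^*\omega,f^*\alpha\rangle=d\cdot\langle\omega,\alpha\rangle$, which is nonzero because $\cha\k=0$ and $d\geq 1$, a contradiction. Hence $f^*$ is injective, and therefore an isomorphism by the dimension count.

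For part (2), recall that $[d]_E$ is an isogeny of degree $d^2$. First I would check that $[d]_E^*$ preserves the line $F^1$ of regular differentials and acts there by multiplication by $d$: this is the classical fact that the invariant differential $\omega$ satisfies $[d]_E^*\omega=d\omega$, which follows from $m^*\omega=\mathrm{pr}_1^*\omega+\mathrm{pr}_2^*\omega$ for the addition map $m\colon E\times E\to E$ together with induction on $d$. Choosing a complementary vector $\alpha$ with $\langle\alpha,\omega\rangle\neq 0$ and writing $[d]_E^*\alpha=a\alpha+b\omega$, the identity $\langle [d]_E^*\,\cdot\,,\,[d]_E^*\,\cdot\,\rangle=d^2\langle\cdot,\cdot\rangle$ of Proposition \ref{noite19may} (with $\langle\omega,\omega\rangle=0$) forces $a=d$; thus $[d]_E^*$ is triangular with both diagonal entries equal to $d$, but the intersection form alone does not rule out the off-diagonal term $b$.

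The main obstacle is precisely this: showing that $[d]_E^*$ is scalar multiplication by $d$ on all of $H^1_\dR(E)$ and not merely triangular. I would remove the term $b$ using the fact that $\End(E)\to\End(H^1_\dR(E))$, $\phi\mapsto\phi^*$, is additive. This follows from $m^*\beta=\mathrm{pr}_1^*\beta+\mathrm{pr}_2^*\beta$ for every $\beta\in H^1_\dR(E)$ via the K\"unneth decomposition of $H^1_\dR(E\times E)$: for $\phi,\psi\colon E\to E$ one factors $\phi+\psi=m\circ(\phi,\psi)$ and computes $(\phi+\psi)^*\beta=\phi^*\beta+\psi^*\beta$. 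Writing $[d]_E=[1]_E+\cdots+[1]_E$ ($d$ summands) in $\End(E)$ then yields $[d]_E^*=d\cdot\mathrm{id}$ at once, since $[1]_E^*=\mathrm{id}$. Alternatively, by the Lefschetz principle it suffices to argue over $\C$, where one identifies $H^1_\dR(E)\cong H^1(E,\C)=\mathrm{Hom}(H_1(E,\Z),\C)$ and observes that $[d]_E$ acts on $E=\C/\Lambda$ by $z\mapsto dz$, hence by multiplication by $d$ on $H_1(E,\Z)=\Lambda$ and therefore by multiplication by $d$ on $H^1_\dR(E)$.
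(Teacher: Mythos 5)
Your proof is correct, but it takes a genuinely different route from the paper's. The paper proves part (2) first, by exactly the transcendental argument you relegate to an alternative: over $\C$ (sufficient by the Lefschetz principle invoked in the introduction) one writes $E=\C/\langle \tau,1\rangle$, notes that $[d]_E$ is induced by $z\mapsto d\cdot z$, and reads off multiplication by $d$ on the basis $dz,\ d\bar z$ of the $C^\infty$ de Rham cohomology; it then deduces part (1) from part (2) via the dual isogeny $g$, since $f^*\circ g^*=[d]_{E_1}^*=d\cdot\mathrm{id}$ and $g^*\circ f^*=[d]_{E_2}^*=d\cdot\mathrm{id}$ are invertible in characteristic zero. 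You go the other way around: your part (1) is independent of part (2), resting only on Proposition \ref{noite19may} and the non-degeneracy of the intersection form (which the paper's computation $\langle \frac{dx}{y},\frac{xdx}{y}\rangle=1$ supplies), and your part (2) is purely algebraic, via the additivity of $\End(E)\to\End(H^1_\dR(E))$ obtained from the K\"unneth identity $m^*\beta=\mathrm{pr}_1^*\beta+\mathrm{pr}_2^*\beta$. Both routes are sound. Yours buys independence from the complex uniformization: the additivity argument works over any base field directly (indeed in any characteristic, whereas part (1) genuinely needs $\cha \k=0$, as Frobenius pullback in characteristic $p$ shows), and you correctly identify and repair the trap that the intersection form alone only makes $[d]_E^*$ triangular with diagonal entries $d$, a point the paper never has to confront. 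The paper's route buys brevity: once part (2) is known analytically, the dual isogeny makes part (1) a one-line consequence, at the cost of routing an algebraic statement through a transcendental computation.
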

\begin{proof}
In the complex context,  $E=\C/\langle \tau,1\rangle$ and $[d]_E$ is induced by $\C\to \C,\ z\mapsto d\cdot z$. Moreover,  
a basis of the $C^\infty$ 
de Rham  cohomology is given by $dz,\ d\bar z$. This proves the second part of the proposition.
For the first part we take the dual isogeny and use the first part.
\end{proof}

For $E$ an elliptic curve over an algebraically closed field $\k$ of characteristic zero, the number of isogenies $f: E_1\to E$ of degree $d$
is equal to $\sigma_{1}(d):=\sum_{c\mid d}c$. To prove this we may work in the complex context and assume that 
$E=\C/\langle \tau,1\rangle$. The number of such isogenies is the number of subgroups of order $d$ of $(\Z/d\Z)^2$.
\section{Geometric Hecke operators}
\label{27nov2011-3}
In this section all the algebraic objects are defined over $\k$ unless it is mentioned explicitly.
Let $d$ be a positive integer. The Hecke operator $T_d$ acts on the space of full 
quasi-modular forms as follow:
$$
T_d: \mf{n}{m}\to \mf nm,\ \
$$ 
$$
 T_d(t)(E,\omega)=\frac{1}{d}\sum_{f:E_1\to E,\  \deg(f)=d}  t(E_1, f^*\omega), \  \ t\in \mf nm
$$
where the sum runs through all isogenies $f:E_1\to E$ of degree $d$ defined over 
$\bar\k$. Since $(E,\omega)$ and $t$ are defined over $\k$, $T_d(t)(E,\omega)$ is invariant under $\Gal (\bar\k/\k)$ and so it is in the field $\k$. This 
implies that $T_d(t)$ is defined over $\k$.
The statement $T_d\in \k[t_1,t_2,t_3]$ is not at all clear. 
In order to prove this,   we assume that $\k=\C$ and we prove the same statement for holomorphic quasi-modular forms, see \S \ref{1march2012}.
The functional equation of $T_dt$ with respect to the action of the algebraic groups  ${\mathbb G}_m$ and  ${\mathbb G}_a$ can be proved in the algebraic context as follow:
\begin{prop}
\label{4nov2011}
The action of ${\mathbb G}_m$ commutes with Hecke operators, that is,
\begin{equation}
\label{CFE}
k\bullet T_d(t)=T_d(k\bullet t),\ t\in \mf{}{},\ k\in {\mathbb G}_m
\end{equation}
and the action of ${\mathbb G}_a$ satisfies:
$$
k\bullet T_d(t)=T_d((d \cdot k)\bullet t),\ t\in \mf{}{},\ k\in {\mathbb G}_a.
$$
\end{prop}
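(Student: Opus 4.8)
The plan is to reduce both identities to the defining average for $T_d$ and then track, summand by summand, how pullback along a degree-$d$ isogeny interacts with the scalar $k$ and with the distinguished form $\alpha$. The left action on functions induced by the right action on $T(\k)$ is $(k\bullet t)(E,\omega)=t((E,\omega)\bullet k)$, so in every case it suffices to understand $T_d(t)((E,\omega)\bullet k)$ term by term over the (fixed) index set of isogenies $f\colon E_1\to E$ of degree $d$. For $k\in{\mathbb G}_m$ this is immediate: since $(E,\omega)\bullet k=(E,k\omega)$ and $f^*$ is $\k$-linear, each isogeny contributes $t(E_1,f^*(k\omega))=t(E_1,k\cdot f^*\omega)=t((E_1,f^*\omega)\bullet k)=(k\bullet t)(E_1,f^*\omega)$. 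Dividing by $d$ and summing over the same isogenies yields $k\bullet T_d(t)=T_d(k\bullet t)$, which is (\ref{CFE}).

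For $k\in{\mathbb G}_a$ the strategy is identical, but the added form is $\alpha$, which is \emph{not} simply pulled back to the distinguished form of the pullback pair: a factor $d$ intervenes, and extracting it is the heart of the argument. Fix an isogeny $f\colon E_1\to E$ of degree $d$, let $\alpha\in F^1$ be attached to $(E,\omega)$ by $\langle\alpha,\omega\rangle=1$, and let $\alpha_1\in F^1$ be attached to $(E_1,f^*\omega)$ by $\langle\alpha_1,f^*\omega\rangle=1$. Since the pullback of a regular differential is regular, $f^*$ maps $F^1$ of $E$ into the one-dimensional space $F^1$ of $E_1$ (an isomorphism by Proposition \ref{29feb2012}(1)), so $f^*\alpha=c\,\alpha_1$ for some scalar $c\in\k$. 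To find $c$, pair with $f^*\omega$ and invoke Proposition \ref{noite19may}: on one hand $\langle f^*\omega,f^*\alpha\rangle=d\,\langle\omega,\alpha\rangle=-d$, because the intersection form is alternating and $\langle\alpha,\omega\rangle=1$; on the other hand $\langle f^*\omega,f^*\alpha\rangle=c\,\langle f^*\omega,\alpha_1\rangle=-c$. Hence $c=d$, that is, $f^*\alpha=d\cdot\alpha_1$.

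With this identity the second functional equation follows. For $k\in{\mathbb G}_a$ we have $(E,\omega)\bullet k=(E,\omega+k\alpha)$, so each summand of $T_d(t)((E,\omega)\bullet k)$ equals $t(E_1,f^*(\omega+k\alpha))=t(E_1,f^*\omega+k\cdot f^*\alpha)=t(E_1,f^*\omega+(dk)\alpha_1)$, and the last form is by definition $(E_1,f^*\omega)\bullet(dk)$ for the ${\mathbb G}_a$-action, so the summand is $((dk)\bullet t)(E_1,f^*\omega)$. Averaging over the degree-$d$ isogenies gives $k\bullet T_d(t)=T_d((d\cdot k)\bullet t)$, as claimed. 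The only genuinely nontrivial point is the scalar $f^*\alpha=d\,\alpha_1$; it rests entirely on Proposition \ref{noite19may} together with the one-dimensionality of $F^1$ and its preservation under $f^*$, after which nothing remains beyond pairing with $f^*\omega$ and bookkeeping the alternating sign.
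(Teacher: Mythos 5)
Your proof is correct and follows essentially the same route as the paper: the ${\mathbb G}_m$ identity by $\k$-linearity of $f^*$, and the ${\mathbb G}_a$ identity via Proposition \ref{noite19may}. The only difference is that you make explicit the key scalar identity $f^*\alpha=d\cdot\alpha_1$ (where $\alpha_1$ is the distinguished regular form attached to $(E_1,f^*\omega)$), which the paper uses implicitly when it rewrites $f^*(\omega+k\alpha)$ as $f^*\omega+d\cdot k\, f^*(\frac{1}{d}\alpha)$.
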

\begin{proof}
The first equality is trivial:
\begin{eqnarray*}
(k\bullet T_d(t)) (E,\omega) &=& T_d(t)(E, k\omega)=\frac{1}{d}\sum t(E_1, f^*(k\omega))\\
&=&
\frac{1}{d}\sum (k\bullet t)(E_1, f^*(\omega))=T_d(k\bullet t)(E,\omega)
\end{eqnarray*}
For the second equality we use 
Proposition \ref{noite19may}:
\begin{eqnarray*}
(k\bullet T_d(t)) (E,\omega) &=& T_d(t)(E, \omega+k\alpha)=\frac{1}{d}\sum t(E_1, f^*(\omega+k\alpha))\\
&=&
\frac{1}{d}\sum t(E_1, f^*(\omega)+ d\cdot k f^*(\frac{1}{d}\alpha))=\frac{1}{d}\sum  ((d\cdot k)\bullet t)(E_1, f^*(\omega))\\
&=&      T_d(d\cdot k\bullet t )(E,\omega).
\end{eqnarray*}
\end{proof}

We can also define the Hecke operators  in the following way:
$$
 T_d(t)(E,\omega)=d^{m-1}\sum_{g:E\to E_1,\  \deg(g)=d}  t(E_1, g_*\omega), 
\ t\in \mf nm
$$
where the sum runs through all isogenies $g:E\to E_1$ of degree $d$ defined over 
$\bar \k$. Both definitions of $T_d(t)$ are equivalent: for an isogeny $f: E_1\to E$ of degree $d$ defined over $\bar \k$ we have a unique dual isogeny 
$g: E\to E_1$ such that 
$$
f\circ g=[d]_E,\ \ 
g\circ f=[d]_{E_1}.
$$ 
Therefore by Proposition \ref{29feb2012}  we have $d\cdot g_*\omega= [d]_{E_1}^*(g_*\omega)=f^*\omega$ and so
$$
t(E_1, f^*\omega)=t(E_1, d \cdot g_*\omega)=d^m t(E_1, g_*\omega).
$$
It can be shown that the geometric Eisenstein modular form $G_k$ (see \cite{ho14} \S 6.5) is an eigenform with eigenvalue 
$$
\sigma_{k-1}(d):=\sum_{c\mid d}c^{k-1}
$$ for the Hecke operator $T_d$, that is 
$$
T_dG_k=\sigma_{k-1}(d)G_k,\ \ d\in\N,\ \ k\in 2\N
$$
see for instance \cite{maro05}. 

The differential operator $\Ra: \mf{}{}\to \mf{}{}$ and the Hecke operator $T_d$ commute, that is
$$
{\Ra}\circ T_d=T_d\circ {\Ra}, \ \forall d\in\N.
$$
For the proof we may assume that $\k=\C$. In this way using Theorem \ref{theo-2} it is enough to prove the 
same statement for holomorphic quasi-modular forms, see for instance \cite{ho06-2} Proposition 4.

%

\section{Holomorphic Hecke operators}
\label{1march2012}
In this section we want to use the biholomorphism in Theorem \ref{theo-2} and describe the Hecke operators on holomorphic 
quasi-modular forms. Let us take $\k=\C$. 
\begin{prop}
 The $d$-th Hecke operator on the vector space of quasi-modular forms of weight $m$ and differential order $n$ is given by
$$
T_d:\mf{n}{m}\to \mf{n}{m},\ \ T_df=\sum_{A} f||_mA,
$$
where $A$ runs through the the set $\SL 2\Z\backslash \Mat_d (2,\Z)$ and  $||$ is the double slash operator (\ref{26feb05}) for quasi-modular forms.
\end{prop}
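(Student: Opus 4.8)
The plan is to set $\k=\C$ and transport the geometric Hecke operator of \S\ref{27nov2011-3} to the upper half plane by the biholomorphism $\per$ of Theorem \ref{theo-2}, turning the claim into an identity between holomorphic quasi-modular forms on $\uhp$. I would first invoke the lattice dictionary of \S\ref{27nov2011-2}: writing a slice point as $E=\C/L$ with $L=\Z\tau+\Z$ and $\per(E,\omega)=\mat{\tau}{-1}{1}{0}$, a degree $d$ isogeny $f\colon E_1\to E$ is the projection $\C/L_1\to\C/L$ attached to an index $d$ sublattice $L_1\subseteq L$. A $\Z$-basis $\binom{\omega_1}{\omega_2}=A\binom{\tau}{1}$ of $L_1$ identifies such sublattices, hence such isogenies, with the cosets $\SL 2\Z\backslash\Mat_d(2,\Z)$; the number $\sigma_1(d)$ of isogenies from \S\ref{27nov2011-2} matches the number of cosets, and one may use the Hermite representatives $A=\mat{a}{b}{0}{d'}$ with $ad'=d$, $0\le b<d'$.

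The central step is to compute $\per(E_1,f^*\omega)$. Since $f_*\colon H_1(E_1,\Z)\to H_1(E,\Z)$ is the inclusion $L_1\hookrightarrow L$, it is given by $A$ in the chosen bases; and Proposition \ref{noite19may} forces the canonical holomorphic form of the new pair to be $\frac{1}{d}f^*\alpha$, because $\langle f^*\alpha,f^*\omega\rangle=d$. Combining $\int_{c}f^*\eta=\int_{f_*c}\eta$ with $\per(E,\omega)=\mat{\tau}{-1}{1}{0}$, a direct computation gives
$$
\per(E_1,f^*\omega)=A\cdot\per(E,\omega)\cdot\diag\left(\frac{1}{d},1\right).
$$
Here the left factor $A$, as a determinant $d$ matrix, implements the Hecke correspondence and produces the M\"obius image $A\tau$ (whence the argument $f(A\tau)$ and the automorphy factors in $cz+d$), while the right diagonal factor is, modulo the left $\SL 2\Z$-action, the slice point over $A\tau$ followed by a ${\mathbb G}_m$- (and for a general representative a ${\mathbb G}_a$-) twist of \S\ref{27nov2011-1}. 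Evaluating the quasi-modular form via its transformation law (\ref{4feb05})--(\ref{26feb05}) should then convert $\frac1d\,t(E_1,f^*\omega)$ into the double slash $t||_m A$ of the associated holomorphic form $f$.

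I would carry this out on the Hermite representatives, where $c=0$ annihilates every $i\ge1$ term of (\ref{26feb05}), so that $f||_mA=(\det A)^{m-1}(d')^{-m}f(A\tau)$ and only the weight (the ${\mathbb G}_m$-part) enters; in particular the differential order and the functions $f_i$ are invisible and the factors $\frac1d$, $(\det A)^{m-1}=d^{m-1}$ and the weight power are matched by a direct check. For an arbitrary set of representatives the individual summands change but their sum does not, since $f||_mA=f||_m(BA)$ for $B\in\SL 2\Z$ (\S\ref{joacir?}); hence $\sum_A f||_mA$ is well defined on $\SL 2\Z\backslash\Mat_d(2,\Z)$ and agrees with the geometric $T_df$ as a function on $\uhp$, therefore as an element of $\mf nm$. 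Finally I would record that $T_df\in\mf nm$: it is holomorphic, has the defining functional equation by construction, and its finite growth at the cusp is inherited from $f$ and its associated functions.

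The main obstacle is the middle paragraph: matching the geometric pullback $t(E_1,f^*\omega)$ --- equivalently the Gauss--Manin/Borel transport of the quasi-modular form under the determinant $d$ matrix $A$ --- with the analytic double slash (\ref{26feb05}), and in particular producing the binomial corrections $\bn{n}{i}\left(\frac{-c}{\det A}\right)^i(cz+d)^{i-m}f_i(Az)$ when a representative with $c\ne0$ is used, all while keeping the determinant powers and the factor $\frac1d$ consistent. Restricting to Hermite representatives removes the binomial terms, but one must still verify carefully that the ${\mathbb G}_m$-weight normalization reproduces exactly $(\det A)^{m-1}(d')^{-m}$ after division by $d$.
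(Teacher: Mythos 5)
Your proposal is correct, and its core mechanism is the same as the paper's: take $\k=\C$, transport the geometric operator through the period map of Theorem \ref{theo-2}, and use the period relation coming from the normalization $f^*\omega_2=\omega_1$, $f^*\alpha_2=d\,\alpha_1$ of Proposition \ref{noite19may} --- your formula $\per(E_1,f^*\omega)=A\cdot\per(E,\omega)\cdot\diag(d^{-1},1)$ is exactly the paper's relation $x'B=Ax$ with $B=\mat{d}{0}{0}{1}$. The genuine difference is in the evaluation step. The paper works with an \emph{arbitrary} representative $A=\mat{a_1}{b_1}{c_1}{d_1}$: it factors $A\mat{\tau}{-1}{1}{0}B^{-1}=\mat{A\tau}{-1}{1}{0}\mat{d^{-1}(c_1\tau+d_1)}{-c_1}{0}{(c_1\tau+d_1)^{-1}d}$ and expands $F$ at such a point through the associated functions $f_i$; this is exactly where the binomial terms $\bn{n}{i}(-c_1/d)^i(c_1\tau+d_1)^{i-m}f_i(A\tau)$ of (\ref{26feb05}) are produced, i.e.\ the ${\mathbb G}_a$-part of the twist is computed head-on. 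You instead verify the identity only on Hermite representatives, where the twist is purely diagonal (${\mathbb G}_m$) and no $f_i$ with $i\geq 1$ survives, and then propagate to all representatives via $f||_mA=f||_m(BA)$, $B\in\SL 2\Z$; this is legitimate, because both sides of the desired identity are independent of the choice of representatives --- the geometric side intrinsically, the analytic side by the invariance you quote from \S\ref{joacir?}. Your route is shorter and needs only the weight behavior of geometric forms under ${\mathbb G}_m$; the paper's route is longer but is what actually explains why the full binomial structure of the double slash operator is the analytic incarnation of the geometric pullback, and it never has to single out a special set of representatives. Finally, the ``obstacle'' you flag at the end is not a real gap: for $A=\mat{c'}{b}{0}{c}$ with $cc'=d$ the ${\mathbb G}_m$-parameter is $c'$, and since in the paper's normalization a geometric form of weight $m$ satisfies $t(E,k\omega)=k^m t(E,\omega)$ (recall that $\omega\notin F^1$, so the holomorphic form $\alpha$ scales by $k^{-1}$), the geometric term is $\frac{1}{d}(c')^m f(A\tau)=d^{m-1}c^{-m}f(A\tau)$, which is precisely $f||_mA$ when the lower-left entry vanishes and agrees with (\ref{quedia}); the only hazard is fixing the direction of the ${\mathbb G}_m$-action consistently, which is a matter of convention rather than substance.
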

\begin{proof}
Let us consider two points $(E_i, \{\alpha_i, \omega_i\}),\ \ i=1,2$ in the moduli space $T$. 
Let us also consider a $d$-isogeny $f:E_1\to E_2$ with
$$
f^*\omega_2=\omega_1, \ f^*\alpha_2=d\cdot \alpha_1.
$$ 
We can take a symplectic basis $\delta_1,\gamma_1$ of $H_1(E_1,\Z)$ and  $\delta_2,\gamma_2$ of $H_1(E_2,\Z)$
such that 
$$
f_*[\delta_1,\gamma_1]^\tr=A[\delta_2,\gamma_2]^\tr,
$$
where $A$ is an element in  the set  $\Mat_d(2,\Z)$ of $2\times 2$ matrices with
coefficients in $\Z$ and with determinant $d$. From another side we have
$$
[\alpha_1,\omega_1]B=f^*[\alpha_2,\omega_2],\ \hbox{ where } B=\mat{d}{0}{0}{1}
$$
Therefore, if the period matrix associated to $(E_i,\{\alpha_i, \omega_i\},\{\delta_i\,\gamma_i\}),\ i=1,2$ 
is denoted respectively by by $x'$ and $x$ then 
$$
x'B=Ax.
$$ 
 Using Theorem \ref{theo-2}, the $d$-th Hecke operator acts on the space of $\SL 2\Z$-invariant holomorphic functions on 
$\pedo$ by:
$$
T_dF(x)=\frac{1}{d}\sum_{ A} F(A xB^{-1}),\  x\in\pedo
$$
where $A=\mat{a_1}{b_1}{c_1}{d_1}$ runs through $\SL 2\Z\backslash  \Mat_d (2,\Z)$. 
Let $f\in\mf{n}{m}$ be a holomorphic quasi-modular form defined on the upper half plane. By definition there is a geometric modular form 
$\tilde f\in\C[t_1,t_2,t_3]$  such that $f$ is the pull-back of $\tilde f$ by the composition (\ref{khoshgel?}). Let $F$ be the 
holomorphic function on $\pedo$ obtained by the push-forward of $\tilde f$ by the period map and then its pull-back by 
$\pedo\to \SL 2\Z \backslash \pedo$.
We have
\begin{eqnarray*}
 T_df(\tau) &=& 
T_dF\mat{\tau}{-1}{1}{0} \\
&=& \frac{1}{d}\sum_{A} F(A\mat{\tau}{-1}{1}{0} B^{-1})\\
&=& \frac{1}{d}\sum_{A} F(\mat{A\tau}{-1}{1}{0}\mat{d^{-1}(c_1\tau+d_1)}{-c_1}{0}{(c_1\tau+d_1)^{-1}d})\\
&=&  \frac{1}{d} \sum_{A}  d^{m}(c_1\tau+d_1)^{-m}\sum_{i=1}^n\bn{n}{i}(-c_1)^i(c_1\tau+d_1)^{i}d^{-i}f_i(A(\tau))\\
&=&  \sum_{A} f||_mA
\end{eqnarray*}
\end{proof}
One can take the representatives 
$$
\{A_i\}:=
\left. \left \{\mat{\frac{d}{c}}{b}{0}{c}\right| c\mid d,\ 0\leq b<c \right \}
$$
for the quotient $ \SL 2\Z\backslash  \Mat_d (2,\Z)$ and so
 \begin{equation}
\label{quedia}
T_df(\tau)=\frac{1}{d} \sum_{cc'=d,\ \  0\leq b<c}   
{c'}^{m}f{\Big (}\frac{c'\tau+b}{c}{\Big )}.
\end{equation}
In a similar way to the case of modular forms (see \cite{apo90} \S 6) one can
check that
$$
T_p\circ T_q=\sum_{d\mid (p,q)}d^{m-1}T_{\frac{pq}{d^2}}.
$$
If we write $f=\sum_{n=0}^\infty f_nq^n$ then we have:
$$
(T_df)_n=\sum_{c\mid (d,n)} c^{m-1}f_{\frac{nd}{c^2}}.
$$
In particular if we set $n=0$ then the constant term of $T_d(f)$ is $f_0\sigma_{m-1}(d)$. If $f$ is the eigenvector of $T_d$ 
and the constant term of $f$ is non-zero then the corresponding eigenvalue is $\sigma_{m-1}(d)$.

\section{Refined Hecke operators}
\label{payebabanemikham}
Let $W_d$ be the set of subgroups of $\frac{\Z}{d\Z}\times \frac{\Z}{d\Z}$ of order $d$ and 
$S_d$  be the set (up to isomorphism) of finite groups of order $d$ and generated by at most two elements. We have a canonical surjective map
$W_d\to S_d$.
\begin{prop}
We have bijections
\begin{equation}
\label{cheragazgerefti}
\SL 2\Z\backslash \Mat_d(2,\Z)\cong W_d,
\end{equation}
\begin{equation}
 \label{clarice}
\SL 2\Z\backslash \Mat_d(2,\Z)/\SL 2\Z\cong S_d,
\end{equation}
both given by
$$
\mat{a}{b}{c}{d}\mapsto \frac{\Z^2}{\Z(a,b)+\Z(c,d)}.
$$
If $d$ is square-free then the both side of the second bijection are single points.
\end{prop}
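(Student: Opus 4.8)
The plan is to factor each of the two stated bijections through the geometry of sublattices of $\Z^2$. Throughout I let matrices act on row vectors on the right, so that to $M=\mat{a}{b}{c}{d}\in\Mat_d(2,\Z)$ I attach the row lattice $L_M:=\Z(a,b)+\Z(c,d)\subseteq\Z^2$. Since $\det M=d\neq 0$, the lattice $L_M$ has index $[\Z^2:L_M]=|\det M|=d$, so $\Z^2/L_M$ is a finite abelian group of order $d$ generated by at most two elements. Left and right multiplication by $\SL 2\Z$ will interact with $L_M$ in two different ways, and this is exactly what separates the single-coset statement from the double-coset one.

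First I would prove that $M\mapsto L_M$ induces a bijection between $\SL 2\Z\backslash\Mat_d(2,\Z)$ and the set $\Lambda_d$ of index-$d$ sublattices of $\Z^2$. If $M'=gM$ with $g\in\SL 2\Z$ then the rows of $M'$ are an integral invertible recombination of those of $M$, so $L_{M'}=L_M$; conversely, two matrices with the same row lattice have rows forming two ordered bases of $L_M$, hence differ by some $g\in\GL(2,\Z)$, and since both determinants equal $+d$ one gets $g\in\SL 2\Z$. Surjectivity onto $\Lambda_d$ is clear: given $L\in\Lambda_d$, choose an ordered basis whose matrix has determinant $+d$ (swapping the two vectors if it is $-d$). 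Next I would identify $\Lambda_d$ with $W_d$: because the exponent of $\Z^2/L$ divides its order $d$, every $L\in\Lambda_d$ contains $d\Z^2$, so $L/d\Z^2$ is a subgroup of $(\Z/d\Z)^2=\Z^2/d\Z^2$ of order $d^2/d=d$, i.e.\ an element of $W_d$; the correspondence theorem makes $L\mapsto L/d\Z^2$ a bijection $\Lambda_d\to W_d$, with inverse given by preimage. Composing the two steps yields the first bijection, the element of $W_d$ being the reduction mod $d$ of the two rows, and the displayed quotient $\Z^2/L_M$ being, correspondingly, the cokernel $(\Z/d\Z)^2/(L_M/d\Z^2)$ via the third isomorphism theorem.

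For the second bijection I would pass to Smith normal form. The right action of $h\in\SL 2\Z$ sends $L_M$ to $L_M h$, the image of $L_M$ under the automorphism $h$ of $\Z^2$, whence $\Z^2/L_{Mh}\cong\Z^2/L_M$; together with the left-invariance already shown, this proves that $M\mapsto[\Z^2/L_M]$ is constant on double cosets. Every $M$ factors as $M=U\diag(d_1,d_2)V$ with $U,V\in\GL(2,\Z)$ and $d_1\mid d_2$, $d_1d_2=d$; since $\det M=+d$ forces $\det U\det V=1$, I absorb a common sign using $\mat{1}{0}{0}{-1}$, which commutes with $\diag(d_1,d_2)$, to arrange $U,V\in\SL 2\Z$. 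Thus each double coset has representative $\diag(d_1,d_2)$, and $\Z^2/L_{\diag(d_1,d_2)}\cong\Z/d_1\Z\times\Z/d_2\Z$, whose invariant factors $(d_1,d_2)$ are recovered from the isomorphism class; recalling that $S_d$ is the image of the canonical surjection $W_d\to S_d$, all of whose members are quotients of $(\Z/d\Z)^2$ and hence abelian two-generated groups of order $d$, the induced map from the double coset space to $S_d$ is a bijection. Finally, for square-free $d$ the relations $d_1\mid d_2$ and $d_1d_2=d$ force $d_1=1$, $d_2=d$, since any prime dividing $d_1$ would also divide $d_2$ and hence $d$ to the second power; thus $S_d=\{\Z/d\Z\}$ is a single point, and so is the double coset space.

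The step I expect to be the main obstacle is the passage from $\GL(2,\Z)$ to $\SL 2\Z$ in the double-coset analysis, namely checking that fixing the determinant to be $+d$ does not enlarge the equivalence classes; the sign-absorbing trick above is what makes this go through. The remaining difficulty is bookkeeping rather than substance: one must match the single displayed formula $\Z^2/L_M$ to the two genuinely different targets—a subgroup of $(\Z/d\Z)^2$ for $W_d$ versus an abstract isomorphism class for $S_d$—and observe, since the groups $\Z^2/L_M$ are abelian, that the members of $S_d$ actually realized are precisely the abelian two-generated groups $\Z/d_1\Z\times\Z/d_2\Z$ with $d_1\mid d_2$ and $d_1d_2=d$.
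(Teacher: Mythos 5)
Your proof is correct and follows essentially the same route as the paper: left cosets are identified with index-$d$ sublattices of $\Z^2$ (equivalently, order-$d$ subgroups of $(\Z/d\Z)^2$), the left $\SL 2\Z$-action corresponds to base change in the lattice, the right action to isomorphism of the quotient groups, and double cosets are classified by Smith normal form. The paper's own argument is only a sketch---it defers the first bijection to the Hecke-operator section and states the representatives $\diag(d_1d_2,d_2)$ without proof---so the details you supply (the correspondence-theorem step, the determinant-sign trick refining $\GL(2,\Z)$ to $\SL 2\Z$, and injectivity via invariant factors) are precisely what the paper leaves implicit rather than a different method.
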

\begin{proof}
 First note that the induced maps are both well-defined. The first bijection is already proved and used in \S\ref{1march2012}.
 The action of $\SL 2\Z$ from the left on  $\Mat_d(2,\Z)$ corresponds to the base change in the lattice 
$\Z(a,b)+\Z(c,d)$ in the right hand side of the bijection. The action of $\SL 2\Z$ from the right on  $\Mat_d(2,\Z)$ corresponds to the isomorphism of finite
groups  in the right hand side of the bijection.
\end{proof}
Any element in $S_d$ is isomorphic to the group $\frac{\Z}{d_2\Z}\times \frac{\Z}{d_1d_2\Z}$ for some $d_1,d_2\in\N$ with $d=d_2^2d_1$.
In the right hand side of (\ref{clarice}) the corresponding element is represented by the matrix $\mat{d_1d_2}{0}{0}{d_2}$.
In the geometric context, this means that any isogeny of elliptic curves $E_1\to E_2$ over an algebraically closed field 
can be decomposed into $E_1\stackrel{\alpha}{\to} E_1\stackrel{\beta}{\to} E_2$, where $\alpha$ is the multiplication by $d_2$ 
and $\beta$ is a degree $d_1$ isogeny with cyclic center. Note that
$$
\sigma(d):=\sum_{d=d_2^2d_1}\psi(d_1).
$$
We conclude that we have a natural decomposition of the both geometric and holomorphic Hecke operators:
$$
T_dt=\sum_{d=d_2^2d_1} d_2^{-m-2}\cdot T_{d_1}^0t, \ \ \ t\in \mf{n}{m}
$$
where in the geometric context
$$
T_d^0: \mf{n}{m}\to \mf{n}{m},\  T_d^0(t)(E,\omega)=\frac{1}{d}\sum_{f:E_1\to E,\  \deg(f)=d, \ker(f)\hbox{ is cyclic }}  t(E_1, f^*\omega),
$$
and in the holomorphic context
$$
T_d^0:\mf{n}{m}\to \mf{n}{m},\ \ T_d^0f=\sum_{ A\in (\SL 2\Z\backslash \Mat_d(2,\Z))^0}  f||_mA.
$$
Here, $(\SL 2\Z\backslash \Mat_d(2,\Z))^0$ is the fiber of the map 
$$
\SL 2\Z\backslash \Mat_d(2,\Z)\to \SL 2\Z\backslash \Mat_d(2,\Z)/\SL 2\Z 
$$
over the matrix $\mat{d}{0}{0}{1}$. 
Note that in the geometric context using the Galois action $\Gal (\bar\k/\k)$, we can see that
 $T_d^0$ is defined over the field $\k$ and not its algebraic closure. 
We call $T_d^0$ the refined Hecke operator.  The refined Hecke operator $T_d^0$ will be used in the next sections. Note that if $d$ is 
square free then $T^0_d=T_d$.


\section{Proof of Theorem \ref{7nov2011} }
\label{27nov2011-4}
For a holomorphic quasi-modular form of weight $m$ we associate the polynomial
$$
P_f^0(x):=\prod_{A\in (\SL 2\Z\backslash \Mat_n(2,\Z))^0 }(x-d\cdot f||_{m}A)=\sum_{j=0}^{\psi(d)} P_{f,j}^0 x^j.
$$
\begin{prop}
$P_{f,j}^0$ is a full quasi-modular form of weight $(\psi(d)-j)\cdot m$.
\end{prop}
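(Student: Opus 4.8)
The plan is to exploit the biholomorphism of Theorem~\ref{theo-2} and realize the $\psi(d)$ roots $d\cdot f||_m A$ as values of a single $\SL 2\Z$-invariant function on the period domain $\pedo$, so that the elementary symmetric functions become manifestly invariant. Write $B=\mat{d}{0}{0}{1}$ and let $F$ be the $\SL 2\Z$-invariant holomorphic function on $\pedo$ attached to $f$ exactly as in the proof of the preceding proposition, so that $f(\tau)=F\mat{\tau}{-1}{1}{0}$ and, term by term in that computation, $d\cdot f||_m A=F(A\,v\,B^{-1})$ with $v=\mat{\tau}{-1}{1}{0}\in\pedo$. Since $\det A=d$, $\det v=1$ and $\det B^{-1}=d^{-1}$, the argument $A\,v\,B^{-1}$ again lies in $\pedo$, and because $F$ is left $\SL 2\Z$-invariant the value $F(A\,v\,B^{-1})$ depends only on the coset $\SL 2\Z\,A$; this is what makes the product defining $P^0_f(x)$ well defined over $(\SL 2\Z\backslash\Mat_d(2,\Z))^0$, a set of $\psi(d)$ elements, so that $\deg_x P^0_f=\psi(d)$.

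First I would set up the polynomial on $\pedo$ itself, $\Phi(x,v):=\prod_{A}(x-F(A\,v\,B^{-1}))=\sum_{j=0}^{\psi(d)}P^0_{f,j}(v)\,x^j$, whose coefficients are holomorphic in $v$ and pull back along $v=\mat{\tau}{-1}{1}{0}$ to the $P^0_{f,j}$ of the statement. The $\SL 2\Z$-invariance of these coefficients is the heart of the matter: for $C\in\SL 2\Z$ one has $A(Cv)B^{-1}=(AC)vB^{-1}$, and right multiplication $A\mapsto AC$ permutes the classes of $(\SL 2\Z\backslash\Mat_d(2,\Z))^0$, since by construction this set is a fibre of the quotient map $\SL 2\Z\backslash\Mat_d(2,\Z)\to \SL 2\Z\backslash\Mat_d(2,\Z)/\SL 2\Z$ and is therefore stable under the right action. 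Combined with the left invariance of $F$, this gives $\Phi(x,Cv)=\Phi(x,v)$, so every $P^0_{f,j}$ descends to $\SL 2\Z\backslash\pedo$.

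To pin down the weight I would use the $\mathbb G_m$-homogeneity of $F$. A quasi-modular form of weight $m$ satisfies $k\bullet f=k^{-m}f$, which on the period domain reads $F(v D_k)=k^{-m}F(v)$ with $D_k=\diag(k^{-1},k)$. As $D_k$ and $B^{-1}$ are diagonal they commute, so $F(A(vD_k)B^{-1})=F(AvB^{-1}D_k)=k^{-m}F(AvB^{-1})$, and every root scales by $k^{-m}$. Hence $\Phi(k^{-m}x,vD_k)=k^{-m\psi(d)}\Phi(x,v)$, and matching the coefficient of $x^j$ yields $P^0_{f,j}(vD_k)=k^{-(\psi(d)-j)m}P^0_{f,j}(v)$, i.e. $P^0_{f,j}$ has weight $(\psi(d)-j)\cdot m$. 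For the remaining conditions in the definition of a full quasi-modular form, I would note that the representatives of $(\SL 2\Z\backslash\Mat_d(2,\Z))^0$ may be taken upper triangular, so by (\ref{quedia}) each $d\cdot f||_m A$ is a constant multiple of $f$ evaluated at a point whose imaginary part tends to $+\infty$ with $\Im(\tau)$; thus each root has finite growth, the symmetric functions $P^0_{f,j}$ do too, and they have finite differential order bounded in terms of that of $f$. Holomorphy on $\uhp$ being clear, this places $P^0_{f,j}$ in $\mf{}{}$ with the asserted weight.

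The step I expect to be the main obstacle is the bookkeeping that turns the earlier Hecke-operator computation into the clean per-term identity $d\cdot f||_m A=F(A\,v\,B^{-1})$ and the check that $A\,v\,B^{-1}$ stays in the domain of $F$; once this dictionary is in place, the invariance and the weight count are formal consequences of the left $\SL 2\Z$-invariance and the $\mathbb G_m$-homogeneity of $F$, together with the stability of the cyclic fibre $(\SL 2\Z\backslash\Mat_d(2,\Z))^0$ under the right action of $\SL 2\Z$.
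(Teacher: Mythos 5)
Your skeleton is correct and is a genuinely different route from the paper's. The paper expands the elementary symmetric functions of the roots in terms of the power sums $\sum_A (d\cdot f||_mA)^k$, identifies these, up to rational factors, with $T^0_d(f^k)$, and then simply cites the fact that the refined Hecke operator of \S\ref{payebabanemikham} maps quasi-modular forms of weight $km$ to quasi-modular forms of weight $km$; you instead verify the invariance by hand on $\pedo$. Your three main points are sound: the per-term dictionary $d\cdot f||_mA=F(AvB^{-1})$ is exactly what the computation in \S\ref{1march2012} shows; the fibre $(\SL 2\Z\backslash\Mat_d(2,\Z))^0$ is indeed stable under the right $\SL 2\Z$-action, being a fibre of the projection to the double coset space; and the ${\mathbb G}_m$-homogeneity argument (using that $D_k$ commutes with $B^{-1}$) correctly yields the weight $(\psi(d)-j)\cdot m$.

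There is, however, a genuine gap at the final step, the one you compress into the clause ``they have finite differential order bounded in terms of that of $f$''. What you have actually proved is that $P^0_{f,j}$ is the restriction to the embedded $\uhp$ of an $\SL 2\Z$-invariant, ${\mathbb G}_m$-homogeneous holomorphic function on $\pedo$ whose restriction has finite growth. That is strictly weaker than $P^0_{f,j}\in\mf{}{}$: by Theorem \ref{theo-2} such an invariant function is merely a holomorphic function on $T(\C)=\A^3_\C\setminus\Delta$, while quasi-modularity requires it to be a \emph{polynomial} in $t_1,t_2,t_3$ --- equivalently, in the holomorphic definition, one must exhibit the finitely many associated functions $(P^0_{f,j})_i$ of (\ref{4feb05}) \emph{and} prove finite growth for all of them, not only for $(P^0_{f,j})_0=P^0_{f,j}$. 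Your conditions control the invariant function along the $\uhp$- and ${\mathbb G}_m$-directions of $\pedo$, but say nothing about the remaining unipotent direction, and polynomiality along that direction is precisely the differential-order condition; this is exactly the content the paper purchases by quoting $T^0_d:\mf{n}{m}\to\mf{n}{m}$. The repair fits inside your framework: since $B=\mat{d}{0}{0}{1}$ is diagonal, $B\mat{x}{-c}{0}{x^{-1}}B^{-1}=\mat{x}{-dc}{0}{x^{-1}}$, so each root $v\mapsto F(AvB^{-1})$ inherits from $F$ the property of being polynomial of degree at most $n$ along the right orbits of this triangular group (which is how (\ref{4feb05}) reads on $\pedo$, cf.\ the computation in \S\ref{1march2012}); products and sums inherit this, so $P^0_{f,j}$ has differential order at most $(\psi(d)-j)\cdot n$, and its associated functions are polynomial expressions in the $f_i(A\tau)$, whose finite growth your upper-triangular representatives do control. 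Alternatively, regroup the symmetric functions into power sums and quote the mapping property of $T^0_d$ --- which is the paper's proof.
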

\begin{proof}
 The coefficient  $P_{f,j}^0$ of $x^j$   is 
a homogeneous polynomial with rational coefficients and of degree $\psi(d)-j$  in 
$$
T_d^0(f^k),\ k=1,2,\ldots,\psi(d)-j,\ {\rm weight}(T_d^0(f^k))=k,
$$
where $T_d^0:\mf{}{}\to \mf{}{} $ is the refined $d$-th Hecke operator defined in \S \ref{payebabanemikham}. 
For instance, the coefficient of 
$x^{ \psi(d)-1}$ is 
$-d\cdot T_d^0f$ and the coefficient of $x^{ \psi(d)-2}$ is  $\frac{d}{2}((T_d^0f)^2-T_d^0f^ 2)$. 
Now the assertion follows from the fact that the Hecke operator $T_d^0$
sends a quasi-modular form of weight $m$ to a quasi-modular form of weight $m$.  
\end{proof}

Using the fact that the algebra of quasi-modular forms over $\Q$ is isomorphic to $\Q[E_2,E_4,E_6]$ we conclude that 
$P_f^0(x)$ is a homogeneous polynomial  of degree $\psi(d)\cdot m$ in the ring
$$
\Q[x,E_2,E_4,E_6],\ {\rm weight} (E_{2k})=2k,\ \ k=1,2,3,  \ \ {\rm weight}(x)=m.
$$
The geometric definition of the polynomial $P_f^0(x)$ is:
\begin{equation}
\label{15mar2012}
P_f^0(x)(E,\omega)=\prod (x-f(E_1,g^*\omega))
\end{equation}
where the product is taken over all degree $d$ isogenies $g: E_1\to E$ with cyclic kernel.

\begin{proof}[Proof of Theorem \ref{7nov2011}] 
Let us regard $s_i$'s as holomorphic functions on the upper half plane and $t_i$'s as variables. We define
$$
I_{d,i}=P_{s_i}^0(t_i),\ \ i=1,2,3.
$$
For $i=2,3$,  $T_d^0 s_i^k$ is a homogeneous polynomial of degree $ki$ in $\Q[s_2,s_3],\ {\rm weight}(s_i)=2i$ and for $i=1$, $T_d^0s_i^k$ 
is a homogeneous polynomial of degree $ki$ in $\Q[s_1,s_2,s_3],\ {\rm weight}(s_i)=2i$.
We conclude that  $I_{d,i}(t_i,s_1,s_2,s_3)$ is a homogeneous polynomial of degree
$i\cdot \psi(d)$ in the weighted ring (\ref{28mar2012}) and for $i=2,3$ it does not depend on $t_1$. 
By definition it is monic in $t_i$. We have
$$
s_i||_{2i}\mat{d}{0}{0}{1}=d^{2i-1}t_i(d\tau)
$$
and so $I_{d,i}(d^{2i}\cdot t_i(d\cdot \tau), s_1(\tau),s_2(\tau),\ s_3(\tau))=0$.
\end{proof}

\section{Proof of Theorem \ref{23nov2011}} 
\label{27nov2011-5}
Since the period map $\per$ is a biholomorphism, it is enough to prove the same statement for 
the push-forward of $\Ra$ and $V_d$ under the product of two period maps: 
$$
\per\times \per: T\times T\to \SL 2\Z\backslash \pedo\times \SL 2\Z\backslash \pedo.
$$
First we describe the push-forward of $V_d$. Using Theorem \ref{theo-1} and the comparison of Hecke operators in 
both algebraic and complex context, we have:
$$
V_d\cap (T\times T)=
$$
$$
\{((E_1,\omega_1), (E_2, \omega_2))\in T\times T\ \mid\  \exists f:E_1\to E_2,\ f^*\omega_2=\omega_1, \ \ker(f) \hbox{ is cyclic of order } d \}.
$$
Let us now consider the elliptic curves $E_i,\ i=1,2$ as complex curves.
For a $d$-isogeny $f:E_1\to E_2$ such that $\ker(f)$ is cyclic, we can take a symplectic basis $\delta_1,\gamma_1$ of $H_1(E_1,\Z)$ and  $\delta_2,\gamma_2$ of $H_1(E_2,\Z)$
such that 
$$
f_*\delta_1=d\cdot \delta_2,\ \  f_*\gamma_1= \gamma_2.
$$
Therefore, if the period matrix associated to $(E_i,\{\alpha_i, \omega_i\},\{\delta_i\,\gamma_i\}), \ i=1,2$ 
is denoted respectively by $x$ and $y$ then 
$$ 
x=\pi_d(y):=\mat{y_1}{dy_2}{d^{-1}y_3}{y_4}.
$$
Therefore, the push-forward  of $V_d$ under $\per\times \per$ and then its pull-back to 
$\pedo\times\pedo$ is given by:
$$
V_d^*=\{(\pi_d(y),y)\mid y\in \pedo\}.
$$
The push-forward of the vector field $\Ra_d$ by $\per\times \per$ and then its pull-back in $\pedo\times\pedo$ 
is given by the vector field
$$
\Ra^*=d (y_2\frac{\partial}{\partial y_1}+y_4\frac{\partial}{\partial y_3})-(x_2\frac{\partial}{\partial x_1}+x_4\frac{\partial}{\partial x_3})
$$
where we have used the coordinates $(x,y)$ for $\pedo\times\pedo$.
Now, it can be easily shown  that the above vector field $\Ra^*$  is tangent to $V_d^ *$. 

\begin{rem}\rm
The locus $V_d^*$
contains the one dimensional locus:
\begin{equation}
\label{50copos}
\tilde \uhp:= \{\left(\mat{\tau}{-d}{d^{-1}}{0}, \ \mat{\tau}{-1}{1}{0}\right)\mid \tau\in\uhp\}.
\end{equation}
Note also that the push-forward of the Ramanujan vector field $\Ra$  is tangent to the image of $\uhp\to \pedo$ and restricted to
this locus it  is $\frac{\partial}{\partial \tau}$. Therefore,  $\Ra^*$
is tangent to the locus $\tilde \uhp$ and restricted to there is again 
$\frac{\partial}{\partial \tau}$.

\end{rem}


\section{Proof of Theorem \ref{07nov2011}}
\label{27nov2011-6}
From Theorem \ref{7nov2011} it follows that  $I_{d,1}$ is a linear
combination of the monomials (\ref{8july2011}) and $I_{d,i},\ i=2,3$ is a linear 
combination of the same monomials with $a_1=0$.  The proof is  a slight modification of an argument in holomorphic foliations, see \cite{pereira}.
We prove that $J_{d,i}$'s restricted to $V_d$ are identically zero. We know that $I_{d,i}$ is a linear combination of $\alpha_j$'s with $\C$ 
(in fact $\Q$) coefficients: 
$$
I_{d,i}=\sum c_j\alpha_j.
$$
Since $\Ra_d$ is tangent to the variety $V_d$, we conclude that $\Ra_d^r(\sum c_j\alpha_j)=\sum c_j\Ra^r_d\alpha_j$ restricted to
$V_d$ is zero. This in turn implies that the matrix used in the definition of $J_{d,i}$ restricted  to $V_d$ 
has non-zero kernel and so its determinant restricted to $V_d$ is zero.


\section{Examples and final remarks}
\label{27nov2011-7}
In order to calculate $I_{d,i}$'s using Theorem \ref{07nov2011} we can proceed as follow: 
We use the Gr\"obner basis algorithm and find the 
irreducible components of the affine variety given by the ideal  $\langle J_{d,1},J_{d,2},J_{d,3}\rangle$ and among them identify the variety $V_d$. 
In practice this algorithm fails even for the simplest case $d=2$. In this case 
we have $\deg(J_{2,1})=42,\ \deg(J_{2,2})=40,\ \deg(J_{2,3})=69$ and calculating 
 the Gr\"obner basis of  the ideal $\langle J_{d,1}, J_{d,2},J_{d,3}\rangle$ is a huge amount
of computations. We use the $q$-expansion of $t_i$'s and we calculate $I_{d,i}, i=1,2,3,\ d=2,3$. 
We have written powers of $t_i$ in the first row and the corresponding coefficients  in the second row. For more examples 
see the author's web-page.  
$$
I_{2,1}: \left(
\begin{array}{*{4}{c}}
t_{1}^{3} & t_{1}^{2} & t_{1} & 1 \\
1 & -6s_{1} & 12s_{1}^{2}-3s_{2} & -8s_{1}^{3}+6s_{1}s_{2}-2s_{3}
\end{array}
\right)
$$
$$
I_{2,2}:\left(
\begin{array}{*{4}{c}}
t_{2}^{3} & t_{2}^{2} & t_{2} & 1 \\
1 & -18s_{2} & 33s_{2}^{2} & 484s_{2}^{3}-500s_{3}^{2}
\end{array}
\right)
$$

$$
I_{2,3}:\left(
\begin{array}{*{4}{c}}
t_{3}^{3} & t_{3}^{2} & t_{3} & 1 \\
1 & -66s_{3} & -1323s_{2}^{3}+1452s_{3}^{2} & 10584s_{2}^{3}s_{3}-10648s_{3}^{3}
\end{array}
\right)
$$
$$
I_{3,1}:\left(
\begin{array}{*{5}{c}}
t_{1}^{4} & t_{1}^{3} & t_{1}^{2} & t_{1} & 1 \\
1 & -12s_{1} & 54s_{1}^{2}-24s_{2} & -108s_{1}^{3}+144s_{1}s_{2}-64s_{3} & 81s_{1}^{4}-216s_{1}^{2}s_{2}-48s_{2}^{2}+192s_{1}s_{3}
\end{array}
\right)
$$
$$
I_{3,2}:\left(
\begin{array}{*{5}{c}}
t_{2}^{4} & t_{2}^{3} & t_{2}^{2} & t_{2} & 1 \\
1 & -84s_{2} & 246s_{2}^{2} & 63756s_{2}^{3}-64000s_{3}^{2} & 576081s_{2}^{4}-576000s_{2}s_{3}^{2}
\end{array}
\right)
$$

{\tiny
$$
I_{3,3}:
\left(
\begin{array}{*{5}{c}}
t_{3}^{4} & t_{3}^{3} & t_{3}^{2} & t_{3} & 1 \\
1 & -732s_{3} & -169344s_{2}^{3}+171534s_{3}^{2} & 11007360s_{2}^{3}s_{3}-11009548s_{3}^{3} & -502020288s_{2}^{6}+939266496s_{2}^{3}s_{3}^{2}-437245479s_{3}^{4}
\end{array}
\right)
$$
}
The polynomial $I_{d,i},\ i=2,3$ is monic in $t_i$ and it has integral coefficients. This follows from the formula (\ref{15mar2012}) and the fact that 
geometric modular forms can be defined over a field of characteristic $p$. Our computations shows that $I_{d,1}$ is also monic in $t_1$ and it is with 
integral coefficients. However, the notion of a geometric quasi-modular form is only elaborated over field of characteristic zero (see \cite{ho14}). 
The proof of such a statement for $I_{d,1}$ would need a reformulation of 
the definition of geometric quasi-modular forms.

For $j=0,1,\ldots,\psi(d)$, the coefficients of  $t_i^j$ in  $I_{d,i},\ i=2,3$ is homogeneous of degree $i\cdot \psi(d)-i\cdot j$ in $s_2$ and $s_3$
with ${\rm weight}(s_2)=2$ and ${\rm weight}(s_3)=3$. From this it follows that $I_{d,i}$ can be written in a unique way as a polynomial
in $t_i,s_i$ and $s_2^3-s_3^2$, say it $I_{d,i}=P_{d,i}(t_i,s_i, s_2^3-s_3^2)$. It follows that 
$$
\langle I_{d,1}, I_{d,2}, I_{d,3},  s_2^3-s_3^2\rangle= \langle I_{d,1}, P_{d,2}(t_2,s_2,0), P_{d,3}(t_2,s_2,0), s_2^3-s_3^2\rangle
$$
and so 
the irreducible components of
the variety $I_{d,1}=I_{d,2}=I_{d,3}=s_2^3-s_3^2=0$ are given by 
\begin{equation}
 \label{jashn}
t_2-a_2s_2=t_3-a_3s_3=s_2^3-s_3^2=I_{d,1}=0,
\end{equation}
where $a_i,\ i=2,3$ is a root of the polynomial $P_{d,i}(t_i,1,0)$. We have also
$$
\langle I_{d,1}, I_{d,2}, I_{d,3},  t_2^3-t_3^2\rangle=\langle I_{d,1}, I_{d,2}, I_{d,3},  t_2^3-t_3^2, B(s_2,s_3)\rangle,
$$
where $B(s_2,s_3)$ is the resultant of the polynomials $I_{d,2}(t^3,s_2,s_3)$ and $I_{d,3}(t^2,s_2,s_3)$ with respect to the variable $t$. From the definition
of the resultant it follows that $B$ is in the ideal generated by $I_{d,i},\ i=2,3$ and $t_2^3-t_3^2$. Finally, 
we conclude that the variety $V_d\cap\{ s_2^3-s_3^2=0\}$ has many irreducible components of dimension $2$ and given by (\ref{jashn}) and 
$V_d\cap\{ t_2^3-t_3^2=0\}$ is an irreducible of dimension $2$. By the arguments in \S\ref{27nov2011-5} we know that 
$V_d\cap (T\times T)$ is irreducible and so $V_d$ is also irreducible.



\def\cprime{$'$} \def\cprime{$'$} \def\cprime{$'$}

\end{document}